\newcommand{\Com}{\ensuremath{C^m_0}}
\newcommand{\real}{\ensuremath{\mathbb{R}}}
\newcommand{\rd}{\ensuremath{\mathbb{R}^d}}
\newcommand{\nat}{\ensuremath{\mathbb{N}}}
\newcommand{\no}{\ensuremath{\nat_0}}
\newcommand{\comp}{\ensuremath{\mathbb{C}}}
\newcommand{\dint}{\ensuremath{\mathrm{d}}}
\newcommand{\supp}{\ensuremath{\operatorname{supp}}}
\newcommand{\id}{\ensuremath{\operatorname{id}}}
\newcommand{\rank}{\ensuremath{\operatorname{rank}}}
\newcommand{\spa}{\ensuremath{\operatorname{span}}}
\newcommand{\dom}{\ensuremath{\operatorname{dom}}}
\newcommand{\Dd}{\ensuremath{\mathrm{D}}}
\newcommand{\vol}{\ensuremath{\operatorname{vol}}}
\newcommand{\bli}{\begin{list}{}{\labelwidth1.7em\leftmargin2.1em}}
\newcommand{\eli}{\end{list}}
\newtheorem{lemma}{Lemma}[section]
\newtheorem{prop}[lemma]{Proposition}
\newtheorem{theo}[lemma]{Theorem}
\theoremstyle{definition}
\newtheorem{exams}[lemma]{Examples}
\theoremstyle{remark}
\newtheorem{rem}[lemma]{Remark}
\numberwithin{equation}{section}
\begin{document}


\title{Sharp estimates for approximation numbers of non-periodic Sobolev embeddings}

\author{Therese Mieth}
\address{Mathematisches Institut, Abteilung Funktionalanalysis, Fakult\"at f\"ur Mathematik und Informatik, Universit\"at Leipzig, Postfach 100920, 04009 Leipzig, Germany}
\email{therese.mieth@math.uni-leipzig.de}

\begin{abstract}
We investigate asymptotically sharp upper and lower bounds for the approximation numbers of the compact Sobolev embeddings $\overset{\circ}{W}{^m}(\Omega)\hookrightarrow L_2(\Omega)$ and $ W^m(\Omega)\hookrightarrow L_2(\Omega)$, defined on a bounded domain $\Omega\subset\mathbb{R}^d$, involving explicit constants depending on $m$ and $d$. The key of proof is to relate the approximation problems to certain Dirichlet and Neumann eigenvalue problems.
\end{abstract}

\keywords{
Approximation numbers, Sobolev spaces, Sharp constants, Curse of dimensionality 
}



\maketitle

\section{Introduction}

We investigate the asymptotic behaviour of the approximation numbers of the embeddings
\[
W^m(\Omega)\hookrightarrow L_2(\Omega)\qquad 
\text{ and }\qquad
\overset{\circ}{W}{^m}(\Omega)\hookrightarrow L_2(\Omega),\quad m\in\nat,
\]
where $W^m(\Omega)=\left\{f\in L_2(\Omega): \exists \Dd^\alpha f \in L_2(\Omega)\, \forall \alpha\in\no^d, |\alpha|\leq m \right\}$ is the usual Sobolev space defined on a bounded domain $\Omega\subseteq\mathbb{R}^d$ and $\overset{\circ}{W}{^m}(\Omega)$ is the closure of $C_0^\infty(\Omega)$ in $W^m(\Omega)$. For a linear and bounded operator $T\in \mathcal{L}(X,Y)$ acting between two Banach spaces $X,Y$ the $k$th- approximation number is defined by
\begin{equation}
a_k(T):= \inf_{\substack{S\in\mathcal{L}(X,Y)\\\rank S<k}}\Vert T-S\Vert_{X\to Y}
=
\inf_{\substack{S\in\mathcal{L}(X,Y)\\\rank S<k}}\sup_{\substack{x\in X\\ x\neq 0}}\frac{\Vert Tx - Sx\Vert_Y}{\Vert x\Vert_X}
, \quad k\in\nat.
\end{equation} 
Approximation numbers are well-established concepts that serve in many situations as an effective measure for compactness especially in Hilbert space settings. The asymptotic behaviour of the sequence $(a_k)_{k\in\nat}$ for classical Sobolev embeddings is well known. In fact it is
\begin{equation}\label{class_estim1}
a_k(\id: W^m(\Omega)\hookrightarrow L_2(\Omega))\sim k^{-\frac{m}{d}}
\end{equation}
meaning that there exist constants $C,c>0$ such that for all $k\in\nat$ it holds
\begin{equation}\label{class_estim}
c\cdot k^{-\frac{m}{d}}\leq a_k(\id: W^m(\Omega)\hookrightarrow L_2(\Omega)) \leq C\cdot k^{-\frac{m}{d}}.
\end{equation}
Note that the latter also holds true for $\overset{\circ}{W}{^m}(\Omega)$ instead of $W^m(\Omega)$. This classical result goes back to \textsc{Kolmogorov} \cite{Kol36}. We refer to \cite[Theorem 4.10.2, p.348]{Tri78}, \cite[Sect.V.6, p.292]{EE87} and references given in subsequent remarks therein. However nowadays there is an increasing interest in the hidden dependence of the constants in \eqref{class_estim} on certain parameters like the dimension of the underling domain or the chosen norm. In fact although many authors dealt with the investigation of approximation numbers (and other concepts) of various embeddings of functions spaces, it was a frequent practice to involve only unspecific constants. There have been established some results including the $d$-dependence of these constants in the case of mixed order Sobolev functions on the $d$-torus by \textsc{D\~{u}ng/Ullrich} \cite{DuUl13} and in a similar framework by \textsc{Chernov/D\~{u}ng} \cite{ChDu16}. Super-exponential decay of such constants in $d$ has been observed already by \textsc{Bungartz/Griebel} \cite{BuGr04}, see also similar results due to \textsc{Griebel/Knapek} \cite{GrKn00, GrKn09} \textsc{Griebel} \cite{Gri06} and \textsc{Schwab/S\"uli/Todor} \cite{Sch.et.al08}. Further results with a particular emphasis on the hidden structure of optimal constants are determined by \textsc{Kühn/Sickel/Ullrich} \cite{KSU14} for periodic functions defined on the $d$-torus $\mathbb{T}^d$. Inter alia these authors proved that the following limit exists
\begin{equation}\label{KSU_lim}
\lim_{k\to\infty}\frac{a_k(H^{s,p}(\mathbb{T}^d)\hookrightarrow L_2(\mathbb{T}^d))}{k^{-\frac{s}{d}}} = (\vol B^d_p)^{\frac{s}{d}}
\end{equation}
where for $1\leq p<\infty$ the periodic Sobolev norm with fractional smoothness $s>0$ is given by $\Vert f| H^{s,p}(\mathbb{T}^d)\Vert = \left(\sum\limits_{\bar{k}\in\mathbb{Z}^d} \big( 1+ \sum\limits_{j=1}^d |k_j|^p \big)^{\frac{2s}{p}} |\hat{f}(\bar{k})|^2\right)^{1/2}$, a weighted $\ell_2$-sum of the Fourier coefficients $\hat{f}(\bar{k})$, and $B^d_p=\{x\in\rd: \sum\limits_{j=1}^d |x_j|^p \leq 1\}.$ Furthermore \cite{KSU14} contains estimates of the form \eqref{class_estim} with explicit constants for large $k\geq k_0$. For instance if $p=2s$ and $k\geq 11^d e^{\frac{d}{2s}}$ the named authors showed
\begin{equation}\label{KSU_estim}
\frac{1}{\sqrt{e(d+2s)}}\cdot k^{-\frac{s}{d}}
\leq 
a_k(\id: H^{s,2s}(\mathbb{T}^d)\hookrightarrow L_2(\mathbb{T}^d))
\leq
\frac{4^s}{\sqrt{d}}\sqrt{2em}\cdot k^{-\frac{s}{d}}.
\end{equation}
Further developments in the periodic setting of Sobolev spaces with fractional smoothness and with mixed order smoothness were made in \cite{CKS16, KMU16,KSU15}. In the non-periodic case even less is known. We refer to \textsc{Krieg} \cite{Krie18} for some results on the approximation of mixed order Sobolev functions on the cube.\\

\noindent
The aim of the present paper is to extend results like \eqref{KSU_lim} and \eqref{KSU_estim} to the non-periodic situation and focus on the constants that appear in the classical asymptotic result \eqref{class_estim}. Therefore we point out an important connection between the approximation numbers and certain eigenvalue problems. The key idea is to use the fact that the approximation numbers of a dense, compact embedding $X\hookrightarrow Y$ between two Hilbert spaces are related with the eigenvalues of a densely defined, positive definite, self-adjoint operator $A:\dom A\subseteq X\to Y$ with pure point spectrum which is uniquely determined by 
\[
\langle f,g\rangle_X=\langle A f, g\rangle_Y \qquad \forall\, f\in\dom A, g\in X.
\]
In that sense we say that the operator $A$ is norm-inducing in $X$. If $X$ is the Sobolev space  $\overset{\circ}{W}{^m}(\Omega)$  or $W^m(\Omega)$ and $Y=L_2(\Omega)$ then the operator $A$ is nothing else than a linear partial differential operator of order $2m$ with respect to Dirichlet or Neumann boundary conditions respectively. The mathematical background is described in Section 2. Using this approach we focus on the asymptotic distribution of corresponding eigenvalues and are able to give the asymptotic constants in the non-periodic setting, similar to \eqref{KSU_lim}. We derive at the beginning of Section 3 that the limit
\begin{equation}
\lim_{k\to\infty}\frac{ a_k(\id:\overset{\circ}{W}{^m}(\Omega)\hookrightarrow L_2(\Omega))}{k^{-\frac{m}{d}}}=\Big(\frac{\vol\Omega}{(2\pi)^d} \vol A^d_m\Big)^{\frac{m}{d}}
\end{equation}
exists where the set $A^d_m=\{z\in\rd: a(z)<1\}$ is determined by various equivalent homogeneous Sobolev norms. We get in Theorem \ref{theo_LiYau} sharp upper bounds
\[
 a_k(\id:\overset{\circ}{W}{^m}(\Omega)\hookrightarrow L_2(\Omega)) \leq \sqrt{\tfrac{2m+d}{d}}\cdot\Big(\frac{\vol \Omega}{(2\pi)^d}\vol A^d_m\Big)^{\frac{m}{d}}\cdot k^{-\frac{m}{d}}
\]
for all $k\in\nat$. As a counterpart we show in Theorem \ref{theo_kroe} lower bounds 
\[
a_{k+1}(\id: W^m(\Omega)\hookrightarrow L_2(\Omega)) 
\geq 
\left[1\,+\, \left(\tfrac{2m+d}{2m}\right)^{\frac{2m}{d}}\, \Big(\frac{\vol\Omega}{(2\pi)^d}\,\vol A^d_m \Big)^{-\frac{2m}{d}} \cdot k^{\frac{2m}{d}}\right]^{-\frac{1}{2}}
\]
for all $k\in\nat$. Finally in Section 5 we deal with the special case of the natural Sobolev norm $\Vert f| W^m(Q)\Vert ^* := \big(\Vert f|L_2(Q)\Vert^2\,+\,\sum\limits_{j=1}^d \Vert \partial^m_jf|L_2(Q)\Vert^2\big)^{\frac{1}{2}}$ where only the highest partial derivatives in each direction are taken and $Q=[0,L]^d$ is a cube. The special feature here is that the eigenfunctions of the corresponding norm-inducing operator obey a tensor product structure. Hence the corresponding eigenvalues of the $d$-dimensional problem can be written in terms of the eigenvalues of the one dimensional problem. This similarity to the periodic case enables us to use techniques and arguments from \cite{KSU14}. As a result we get explicit lower and upper bounds for the approximation numbers of the non-periodic $*$-normed Sobolev embedding for large $k\geq k_0$
\[
\Big(\frac{L}{4\pi}\Big)^m \big(\vol B^d_{2m} \big)^{\frac{m}{d}}\cdot k^{-\frac{m}{d}} \leq
a_k(\id: W^{m,*}(Q)\hookrightarrow L_2(Q)) \leq  \Big(\frac{L}{\pi}\Big)^m \big(\vol B^d_{2m} \big)^{\frac{m}{d}}\cdot k^{-\frac{m}{d}}
\]
where $k_0=k_0(m,d)$ is explicitly given in Theorem \ref{theo_norm*}.

\section{Preliminaries}\label{sec_prelim}

We use standard notation. $\nat$ is the set of all natural numbers, $\no = \nat\cup\{0\}$, $\real$ is the set of all real numbers, $\rd,d\in\nat,$ is the Euclidean $d-$space and $\rd_+=\{z=(z_1,...,z_d)\in\rd: z_i>0, i=1,...,d\}$. For $s\in\real$ we denote $s_+:=\max\,\{0,s\}.$ Let $\lceil t\rceil:=\min\{n\in\nat: t\leq n\}$ be the smallest integer not less than $t>0$, hence $\lceil t\rceil-1<t\leq \lceil t\rceil.$
For $0<p<\infty$ and $x\in\rd$ we denote $\Vert x\Vert_p:=\Big(\sum\limits_{j=1}^d |x_j|^p\Big)^{1/p}$. We write $\ell_p^d$ for $\rd$ equipped with the norm $\Vert\cdot|\ell_p^d\Vert:=\Vert\cdot\Vert_p$. By $B^d_p$ we denote the closed unit ball of $\ell_p^d$.

 $\Com(\Omega)$ collects all complex-valued functions $f$ on $\rd$ having classical derivatives up to order $m\in\no$ with compact support $\supp f$ in $\Omega\subset\rd$.  $L_2(\Omega)$ and all other spaces introduced below are considered in the standard setting of distributions and $\langle f, g\rangle_{L_2(\Omega)}=\int\limits_{\Omega} f(x)\overline{g(x)}\dint x.$ For $f\in L_1(\rd)$ we define the Fourier transform $\mathcal{F}f$ of $f$ by 
\[
(\mathcal{F}f)(z):=(2\pi)^{-\frac{d}{2}}\int_{\rd} f(x)e^{-i x\cdot z}\dint x,\ z\in\rd.
\]
For a finite set $M$ we denote by $\# M$ the number of elements in $M$. Denote by $\vol\Omega$ the Lebesgue measure of a domain $\Omega\subset\rd.$

\subsection{Operators with pure point spectrum}
We start with some basic facts about positive definite, self-adjoint operators in Hilbert spaces. For details we refer to Chapter 4 in \cite{Tri92}. Let  $\mathcal{A}:\dom\mathcal{A}\to H$ be a densely defined, linear operator acting on the Hilbert space $H$ with inner product $\langle\cdot,\cdot\rangle$ and norm $\Vert\cdot\Vert:=\sqrt{\langle\cdot,\cdot\rangle}$. $\mathcal{A}$ is said to be \textit{positive definite}, if
\begin{equation}\label{def_pd}
\exists c>0\, \forall x\in \dom\mathcal{A}: \langle \mathcal{A}x, x\rangle \geq c \Vert x\Vert^2.
\end{equation}
The energy space $H_{\mathcal{A}}$ of the operator $\mathcal{A}$ is defined as
\begin{equation}\label{def_energyspace}
H_{\mathcal{A}}:=\left\{ x\in H\,|\, \exists (x_n)_{n\in\nat}\subseteq \dom\mathcal{A}: \Vert x-x_n\Vert \overset{n\to\infty}\longrightarrow 0,\, \Vert x_m-x_n|H_{\mathcal{A}}\Vert \overset{n,m\to\infty}\longrightarrow 0 \right\}
\end{equation}
where $\Vert x|H_{\mathcal{A}}\Vert:= \sqrt{\langle \mathcal{A}x,x\rangle}$ for $x\in\dom\mathcal{A}$. $H_{\mathcal{A}}$ becomes a Hilbert space furnished with the inner product
\begin{equation}
\left[ x,y \right]_{H_{\mathcal{A}}}:= \lim_{n\to\infty} \langle \mathcal{A}x_n,y_n\rangle
\end{equation}
since the limit exists and is independent  of the choice of the approximation sequence in \eqref{def_energyspace}. Let $\mathcal{A}_F$ denote the Friedrichs extension of $\mathcal{A}$, i.e. $\dom\mathcal{A}_F:=H_{\mathcal{A}}\cap \dom \mathcal{A}^*, \mathcal{A}_F f:=\mathcal{A}^* f$, where $\mathcal{A}^* $ is the adjoint operator. $\mathcal{A}_F$ is a self-adjoint extension of $\mathcal{A}$ and 
\begin{equation}
\forall x\in \dom\mathcal{A}_F: \langle \mathcal{A}_Fx, x\rangle \geq c \Vert x\Vert^2
\end{equation}
with the same constant $c>0$ as in \eqref{def_pd}. As standard, the operator $(\mathcal{A}_F)^{\frac{1}{2}}$ is defined via spectral representation. Then
\begin{equation}
H_{\mathcal{A}}= H_{\mathcal{A}_F} = \dom (\mathcal{A}_F)^{\frac{1}{2}}
\end{equation}
and $(\mathcal{A}_F)^{\frac{1}{2}}$ provides a unitary mapping of $H_{\mathcal{A}}$ onto $H$
\begin{equation}
\forall x\in H_{\mathcal{A}}: \Vert x|H_{\mathcal{A}}\Vert = \Vert (\mathcal{A}_F)^{\frac{1}{2}}x\Vert.
\end{equation}
A positive definite, self-adjoint operator $A$ is called \textit{an operator with pure point spectrum} if its spectrum consists solely of eigenvalues of finite multiplicity. We list some properties of such operators:
\begin{itemize}
\item[(i)]
$A$ is unbounded i.e. the eigenvalues of $A$, monotonically ordered according to their multiplicity, satisfy
\[
0<\lambda_1\leq\lambda_2\leq ...\leq \lambda_k\overset{k\to\infty}{\longrightarrow}\infty.
\]
\item[(ii)]
The system of corresponding orthonormal eigenvectors $\{x_k:k\in\nat\}$ is complete.
\item[(iii)]
$\dom A =\left\{x\in H: \sum\limits_{k=1}^\infty \lambda_k^2|\langle x,x_k\rangle|^2<\infty\right\}$ and $Ax=\sum\limits_{k=1}^\infty \lambda_k\langle x,x_k\rangle x_k$ with convergence in $H$.
\item[(iv)]
$H_A=\left\{ x\in H:\sum\limits_{k=1}^\infty \lambda_k|\langle x,x_k\rangle|^2<\infty\right\}$ and $\Vert x| H_A\Vert^2=\sum\limits_{k=1}^\infty \lambda_k|\langle x,x_k\rangle|^2$.
\item[(v)]
$A^{-1}$ and $A^{-\frac{1}{2}}=(A^{\frac{1}{2}})^{-1}$ are compact operators from $H$ to $H.$ $A^{-1}$ regarded as a mapping from $H$ into $H_A$ is compact, too.
\end{itemize}

The following criterion is due to \textsc{Rellich}.
\begin{prop}\label{prop_rell}
Let $A$ be a densely defined, positive definite and self-adjoint operator.  Then $A$ is an operator with pure point spectrum if and only if the embedding $\id: H_A\hookrightarrow H$ is compact. In that case it holds
\begin{equation}\label{rellich}
a_k(\id: H_A\hookrightarrow H) = \lambda_k^{-\frac{1}{2}},\quad k\in\nat,
\end{equation}
where $\lambda_k$ denotes the $k^{\textup{th}}$ eigenvalue of $A$.
\end{prop}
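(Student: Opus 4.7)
The plan is to establish the equivalence in two steps and then derive the eigenvalue identity via a factorization through a unitary map.

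\textbf{Forward direction.} Suppose $A$ has pure point spectrum. By property (v) above, $A^{-1/2}\colon H\to H$ is compact, and by the preliminaries on the energy space, $A^{1/2}\colon H_A\to H$ is a unitary isomorphism (since $\|x|H_A\|=\|A^{1/2}x\|$). I would simply factor
\[
\id_{H_A\to H} \;=\; A^{-1/2}\circ A^{1/2},
\]
realising the embedding as a bounded operator followed by a compact one, which yields compactness.

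\textbf{Converse direction.} Assume $\id\colon H_A\hookrightarrow H$ is compact. Positive definiteness together with self-adjointness gives $0\notin\sigma(A)$, hence $A^{-1}\colon H\to H$ is a bounded self-adjoint operator. A short computation shows $A^{-1}$ is bounded as a map into the energy space: for $y\in H$,
\[
\|A^{-1}y|H_A\|^{2} \;=\; \langle A\,A^{-1}y,\,A^{-1}y\rangle \;=\; \langle y,\,A^{-1}y\rangle \;\leq\; c^{-1}\|y\|^{2},
\]
using the defining estimate \eqref{def_pd} applied to $x=A^{-1}y$. Composing the bounded map $A^{-1}\colon H\to H_A$ with the compact embedding $H_A\hookrightarrow H$ shows that $A^{-1}\colon H\to H$ is compact and self-adjoint. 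The classical Hilbert--Schmidt spectral theorem then produces a complete orthonormal basis of eigenvectors of $A^{-1}$ with positive eigenvalues $\mu_k\downarrow 0$, and correspondingly $A$ has pure point spectrum with eigenvalues $\lambda_k=\mu_k^{-1}\uparrow\infty$.

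\textbf{The eigenvalue identity \eqref{rellich}.} Using the same factorization as above and the fact that approximation numbers are invariant under composition with isometric isomorphisms, I would deduce
\[
a_k(\id\colon H_A\hookrightarrow H) \;=\; a_k(A^{-1/2}\colon H\to H).
\]
The operator $A^{-1/2}$ is positive, self-adjoint and compact on $H$ with eigenvalues $\lambda_k^{-1/2}$ (decreasingly ordered, matching the increasing order of the $\lambda_k$). The standard result that the approximation numbers of a compact self-adjoint operator coincide with the moduli of its eigenvalues then finishes the proof, the upper bound coming from truncating the spectral expansion after $k-1$ terms and the lower bound from a dimension-counting / minimax argument. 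The main obstacle is essentially bookkeeping in this last step (ensuring the correspondence of orderings and justifying optimality of the spectral truncation), since both implications of the equivalence reduce quickly to the abstract properties (i)--(v) already listed.
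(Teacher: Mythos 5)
Your proof is correct and follows essentially the same route as the paper: reduce $a_k(\id\colon H_A\hookrightarrow H)$ to $a_k(A^{-1/2}\colon H\to H)$ via the unitarity of $A^{\pm 1/2}$ and multiplicativity of approximation numbers, then get the upper bound by spectral truncation and the lower bound by a rank/minimax argument. The only difference is that you also prove the compactness equivalence (correctly, via $A^{-1}$ being compact and self-adjoint), which the paper simply cites as well known.
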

\begin{proof}
The if-and-only-if statement is well known, cf.\cite{Tri92}[Section 4.5.3, p.258]. We give the proof of \ref{rellich}. Let $\{x_k:k\in\nat\}$ be the set of corresponding orthonormal eigenvectors. Let $T:H\to H$ be a linear and bounded operator with $\rank T<k.$ There exist $\mu_1,...,\mu_k\in\comp$ such that $\sum\limits_{i=1}^k|\mu_i|^2=1\ \text{ and } Ty = 0\text{ where } y=\sum\limits_{i=1}^k \mu_i x_i.$ Hence
\[
\Vert A^{-\frac{1}{2}} - T\Vert_{H\to H}^2 \geq \Vert A^{-\frac{1}{2}} y\Vert^2=\sum_{i=1}^k\lambda_i^{-1}|\mu_i|^2\geq \lambda_k^{-1}.
\]
On the other hand we consider the operator $T_kx:=\sum\limits_{i=1}^{k-1} \lambda_i^{-\frac{1}{2}}\langle x,x_i\rangle x_i$. Then
\[
a_k(A^{-\frac{1}{2}}:H\to H)\leq \Vert T_k - A^{-\frac{1}{2}}\Vert_{H\to H} \leq \lambda_k^{-\frac{1}{2}}.
\]
Finally $a_k(A^{-\frac{1}{2}}:H\to H)=a_k(\id: H_A\hookrightarrow H)$ follows by the unitarity of $A^{\frac{1}{2}}:H_A\to H$ (and hence of $A^{-\frac{1}{2}}:H\to H_A$) and the multiplicativity of approximation numbers.
\end{proof}

\subsection{Characterization of Sobolev spaces by spectral properties of norm-inducing operators}

Let $\Omega\subset\rd$ be a bounded domain and let 
\[
W^m(\Omega)=\left\{f\in L_2(\Omega): \exists \Dd^\alpha f \in L_2(\Omega)\, \forall \alpha\in\no^d, |\alpha|\leq m \right\}
\] 
be the classical Sobolev space. Let us assume that the norm in $W^m(\Omega)$ can be written as
\begin{equation}
\Vert f| W^{m}(\Omega)\Vert_{(b_\alpha)}:= \Big(\Vert f| L_2(\Omega)\Vert^2+\sum_{1\leq|\alpha|\leq m}b_\alpha \Vert\Dd^\alpha f|L_2(\Omega)\Vert^2  \Big)^{\frac{1}{2}}
\end{equation}
with coefficients $b_\alpha\in\{0,1\}$. Similarly assume that the norm in $\overset{\circ}{W}{^m}(\Omega)$, the closure of $C_0^\infty(\Omega)$ in $W^m(\Omega)$, can be written as
\[
\Vert f| \overset{\circ}{W}{^m}(\Omega)\Vert_{(a_\alpha)}:= \Big(\sum_{|\alpha|\leq m}a_\alpha \Vert\Dd^\alpha f|L_2(\Omega)\Vert^2  \Big)^{\frac{1}{2}}
\]
with coefficients $a_\alpha\in\{0,1\}.$ Here we only consider $a_\alpha$'s and $b_\alpha$'s such that the corresponding norms are equivalent to the standard Soblev norm where $a_\alpha=b_\alpha=1$ for all $\alpha\in\no^d, |\alpha|\leq m.$
 
\begin{prop}
Let $\Omega\subseteq\rd$ be a bounded domain. The operator 
\[
(\mathcal{A}f)(x)=\sum\limits_{|\alpha|\leq m}{(-1)^{|\alpha|}}a_\alpha \Dd^{2\alpha} f(x)
\] 
defined on $\dom\mathcal{A}= C_0^\infty(\Omega)$ is positive definite and symmetric in $L_2(\Omega)$. Its Friedrichs extension $A$ has pure point spectrum consisting of eigenvalues of finite multiplicity
\[
0<\lambda_1\leq\lambda_2\leq ...\leq \lambda_k\overset{k\to\infty}{\longrightarrow}\infty.
\]
If $\{\psi_k:k\in\nat\}$ denotes the corresponding sequence of orthonormal eigenfunctions of $A$, it holds
\begin{equation}
\forall f\in\overset{\circ}{W}{^m}(\Omega):\ 
\Vert f| \overset{\circ}{W}{^m}(\Omega)\Vert_{(a_\alpha)}^2= \sum_{k=1}^\infty \lambda_k\,|\langle f,\psi_k\rangle_{L_2(\Omega)}|^2.
\end{equation}
Furthermore for all $k\in\nat$ we have
\begin{equation}\label{ak_W0^m}
a_k(\id:\overset{\circ}{W}{^m}(\Omega)\hookrightarrow L_2(\Omega))=\lambda_k^{-\frac{1}{2}}.
\end{equation}
\end{prop}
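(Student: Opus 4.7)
The plan is to run the proof in four short moves, each exploiting material already developed above.

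First, I would verify symmetry and positive definiteness of $\mathcal{A}$ on $C_0^\infty(\Omega)$ by integration by parts. For $f,g\in C_0^\infty(\Omega)$ the compact supports eliminate all boundary contributions, so
\[
\langle \mathcal{A}f,g\rangle_{L_2(\Omega)} = \sum_{|\alpha|\leq m} a_\alpha (-1)^{|\alpha|}\int_\Omega \Dd^{2\alpha}f(x)\overline{g(x)}\dint x = \sum_{|\alpha|\leq m} a_\alpha \langle \Dd^\alpha f,\Dd^\alpha g\rangle_{L_2(\Omega)}.
\]
The right-hand side is symmetric in $f,g$, and for $f=g$ it equals $\Vert f|\overset{\circ}{W}{^m}(\Omega)\Vert_{(a_\alpha)}^2$. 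Positive definiteness $\langle \mathcal{A}f,f\rangle\geq c\Vert f\Vert_{L_2(\Omega)}^2$ then follows from the Friedrichs inequality on the bounded domain $\Omega$, which is applicable because the $(a_\alpha)$-norm is by assumption equivalent to the standard Sobolev norm.

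Next, I would identify the energy space $H_\mathcal{A}$ of \eqref{def_energyspace} with $\overset{\circ}{W}{^m}(\Omega)$. The calculation above shows that the energy norm $\sqrt{\langle \mathcal{A}f,f\rangle}$ on $C_0^\infty(\Omega)$ coincides with $\Vert\cdot|\overset{\circ}{W}{^m}(\Omega)\Vert_{(a_\alpha)}$, and the standing norm-equivalence hypothesis gives that the completion of $C_0^\infty(\Omega)$ under this norm is precisely $\overset{\circ}{W}{^m}(\Omega)$; the $L_2$-convergence condition in \eqref{def_energyspace} is automatic thanks to the Friedrichs inequality. By the classical Rellich--Kondrachov theorem, the embedding $\overset{\circ}{W}{^m}(\Omega)\hookrightarrow L_2(\Omega)$ is compact, so Proposition~\ref{prop_rell} applies to the Friedrichs extension $A:=\mathcal{A}_F$: it is positive definite and self-adjoint, has pure point spectrum $0<\lambda_1\leq\lambda_2\leq\cdots\to\infty$ of finite multiplicities, and its approximation numbers satisfy \eqref{ak_W0^m}.

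The spectral identity for the norm then follows directly from property (iv) recalled above for operators with pure point spectrum: for every $f\in \overset{\circ}{W}{^m}(\Omega)=H_A$,
\[
\Vert f|\overset{\circ}{W}{^m}(\Omega)\Vert_{(a_\alpha)}^2 = \Vert f|H_A\Vert^2 = \sum_{k=1}^\infty \lambda_k\,|\langle f,\psi_k\rangle_{L_2(\Omega)}|^2,
\]
where $\{\psi_k:k\in\nat\}$ is the orthonormal eigenbasis of $A$. I expect the only delicate step to be the identification $H_\mathcal{A}=\overset{\circ}{W}{^m}(\Omega)$: it hinges jointly on the norm-equivalence hypothesis on $(a_\alpha)$ and on the Friedrichs inequality for bounded $\Omega$, without which one could neither guarantee that the abstract completion in \eqref{def_energyspace} lives inside $L_2(\Omega)$ nor that it equals the concrete Sobolev space $\overset{\circ}{W}{^m}(\Omega)$.
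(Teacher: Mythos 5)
Your proof is correct and follows essentially the same route as the paper: reduce everything to Proposition~\ref{prop_rell} and property (iv) of operators with pure point spectrum, and verify that the energy space $H_{\mathcal{A}}$ coincides with $\overset{\circ}{W}{^m}(\Omega)$ using the norm equivalence together with the fact that energy-norm convergence controls $L_2$-convergence. You merely spell out some steps the paper leaves implicit (integration by parts for symmetry, positive definiteness via the Friedrichs inequality, and Rellich--Kondrachov compactness), which is fine.
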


\begin{proof}
Due to our previous observations, in particular Proposition \ref{prop_rell}, we only have to be sure that the energy space of $\mathcal{A}$ coincides with $\overset{\circ}{W}{^m}(\Omega)$.  Let $f\in H_{\mathcal{A}}$, i.e. there exists $(f_n)_n\subseteq C^\infty_0(\Omega)$ such that $\Vert f-f_n|L_2(\Omega)\Vert\overset{n\to\infty}{\longrightarrow}0$ and $\Vert f_m-f_n|\overset{\circ}{W}{^m}(\Omega)\Vert_{(a_\alpha)}\overset{n,m\to\infty}{\longrightarrow}0$. Due to completeness there is a $g\in\overset{\circ}{W}{^m}(\Omega)$ with $\Vert g-f_n|\overset{\circ}{W}{^m}(\Omega)\Vert_{(a_\alpha)}\overset{n\to\infty}{\longrightarrow}0$. Hence
\[
\Vert f-g|L_2(\Omega)\Vert \leq \Vert f-f_n|L_2(\Omega)\Vert + c\,\Vert g-f_n|\overset{\circ}{W}{^m}(\Omega)\Vert_{(a_\alpha)} \overset{n\to\infty}{\longrightarrow}0,
\]
so $f\equiv g$. On the other hand the inclusion $\overset{\circ}{W}{^m}(\Omega)\subseteq H_{\mathcal{A}}$ is also clear and we have for all $f\in C_0^\infty(\Omega)=\dom \mathcal{A}$
\begin{equation}\label{norm=1}
\Vert f|\overset{\circ}{W}{^m}(\Omega)\Vert_{(a_\alpha)}^2 = \langle \mathcal{A} f, f\rangle = \Vert f| H_A\Vert^2.
\end{equation}
\end{proof}

\begin{prop}\label{prop_sobolevW^m}
Let $\Omega\subseteq\rd$ be a bounded Lipschitz domain and $\nu$ denote the outward unit normal to $\Omega$. The operator 
\[
(\mathcal{B}f)(x)=\sum\limits_{1\leq|\alpha|\leq m}{(-1)^{|\alpha|}}b_\alpha \Dd^{2\alpha} f(x)
\]
defined on $\dom\mathcal{B}= C^{m,\nu}(\Omega):=\{f\in C^\infty(\overline{\Omega}):\frac{\partial^j f}{\partial \nu^j}=0 \text{ a.e. on } \partial\Omega, j=m,...,2m-1\}$ is non-negative and symmetric in $L_2(\Omega)$. Its Friedrichs extension $B$ has pure point spectrum consisting of eigenvalues of finite multiplicity
\[
0=\mu_1\leq\mu_2\leq ...\leq \mu_k\overset{k\to\infty}{\longrightarrow}\infty.
\]
If $\{\phi_k:k\in\nat\}$ denotes the corresponding sequence of orthonormal eigenfunctions of $B$, it holds
\begin{equation}
\forall f\in W{^m}(\Omega):\ 
\Vert f| W{^m}(\Omega)\Vert_{(b_\alpha)}^2= \sum_{k=1}^\infty (1+\mu_k)\,|\langle f,\phi_k\rangle_{L_2(\Omega)}|^2.
\end{equation}
Furthermore for all $k\in\nat$ we have
\begin{equation}\label{ak}
a_k(\id:W{^m}(\Omega)\hookrightarrow L_2(\Omega))=(1+\mu_k)^{-\frac{1}{2}}.
\end{equation}
\end{prop}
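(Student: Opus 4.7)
The proof will mirror the argument for the Dirichlet version above, with two substantive modifications: we must work with the shifted operator $\mathcal{B} + \Id$ (since $\mathcal{B}$ is only non-negative rather than positive definite), and we must identify the energy space with $W^m(\Omega)$ rather than with its closed subspace $\overset{\circ}{W}{^m}(\Omega)$.

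First I would verify that $\mathcal{B}$ is symmetric and non-negative on $\dom \mathcal{B} = C^{m,\nu}(\Omega)$. Iterated integration by parts on the bounded Lipschitz domain $\Omega$ produces
\[
\langle \mathcal{B}f, g\rangle_{L_2(\Omega)} = \sum_{1 \leq |\alpha| \leq m} b_\alpha \langle \Dd^\alpha f, \Dd^\alpha g\rangle_{L_2(\Omega)} + (\text{boundary terms}).
\]
The boundary contributions are linear combinations of products of normal derivatives of $f$ of order $j \in \{m,\dots,2m-1\}$ with derivatives of $g$ of complementary order; the requirement $\partial^j f/\partial \nu^j = 0$ on $\partial\Omega$ for precisely that range of $j$ is what forces the definition of $C^{m,\nu}(\Omega)$, and it makes every such contribution vanish. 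Symmetry and non-negativity follow immediately. Passing to the shifted operator $\mathcal{B} + \Id$ yields a densely defined, positive definite, symmetric operator whose energy norm agrees on $C^{m,\nu}(\Omega)$ exactly with $\Vert \cdot | W^m(\Omega)\Vert_{(b_\alpha)}$.

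The central step is then to identify $H_{\mathcal{B}+\Id}$ with $W^m(\Omega)$. One inclusion is automatic, by exactly the same argument as in the previous proposition: any sequence in $C^{m,\nu}(\Omega)$ that is Cauchy in the energy norm is Cauchy in $\Vert \cdot | W^m(\Omega)\Vert_{(b_\alpha)}$ and hence converges in the complete space $W^m(\Omega)$; matching its $L_2$-limit with the element of $H_{\mathcal{B}+\Id}$ it represents gives $H_{\mathcal{B}+\Id} \subseteq W^m(\Omega)$. The reverse inclusion demands density of $C^{m,\nu}(\Omega)$ in $W^m(\Omega)$ with respect to $\Vert \cdot | W^m(\Omega)\Vert_{(b_\alpha)}$. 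For a Lipschitz boundary this is standard but technical: one first approximates an arbitrary $f \in W^m(\Omega)$ by elements of $C^\infty(\overline{\Omega})$ via the usual reflection/mollification procedure, and then modifies each approximant in a tubular neighbourhood of $\partial\Omega$ to flatten the prescribed normal derivatives of orders $m, \dots, 2m-1$ while keeping the $W^m$-error small. I expect this density statement to be the main technical obstacle of the proof.

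Once the energy space has been identified, Proposition \ref{prop_rell} applied to $\mathcal{B} + \Id$ delivers everything at once. The embedding $\id : H_{\mathcal{B}+\Id} = W^m(\Omega) \hookrightarrow L_2(\Omega)$ is compact by Rellich--Kondrachov, so the Friedrichs extension of $\mathcal{B} + \Id$ (which equals $B + \Id$) has pure point spectrum with eigenvalues $1+\mu_k \to \infty$; hence $B$ itself has eigenvalues $\mu_k \to \infty$, and $\mu_1 = 0$ because the sum defining $\mathcal{B}$ omits $|\alpha|=0$, so constants lie in the kernel. The orthonormal eigenfunctions of $B$ and of $B + \Id$ coincide, so property (iv) in the list preceding Proposition \ref{prop_rell} applied to $B+\Id$ yields
\[
\Vert f | W^m(\Omega)\Vert_{(b_\alpha)}^2 = \sum_{k=1}^\infty (1+\mu_k) \, |\langle f, \phi_k\rangle_{L_2(\Omega)}|^2,
\]
and formula \eqref{rellich} applied to $B + \Id$ gives $a_k(\id: W^m(\Omega) \hookrightarrow L_2(\Omega)) = (1+\mu_k)^{-1/2}$, as claimed.
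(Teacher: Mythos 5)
Your proposal follows essentially the same route as the paper: the paper's two-line proof merely recalls the density of $C^{m,\nu}(\Omega)$ in $W^m(\Omega)$ and the identity $\Vert f|W^m(\Omega)\Vert_{(b_\alpha)}^2=\Vert f|L_2(\Omega)\Vert^2+\langle Bf,f\rangle$ on $C^{m,\nu}(\Omega)$, leaving the application of the Friedrichs-extension machinery and Proposition \ref{prop_rell} to the shifted operator $\id+B$ implicit, which is exactly what you spell out. The additional details you supply (integration by parts, identification of the energy space, Rellich compactness, and the density statement you correctly flag as the technical crux) are precisely what the paper takes for granted, so there is no substantive divergence.
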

\begin{proof}
Recall that $C^{m,\nu}(\Omega)$ is dense in $W^m(\Omega)$ and that for $f\in C^{m,\nu}(\Omega$ we have
\begin{equation}\label{norm=2}
\Vert f| W{^m}(\Omega)\Vert_{(b_\alpha)}^2= \Vert f|L_2(\Omega)\Vert^2 + \langle Bf,f\rangle = \Vert f|L_2(\Omega)\Vert^2 + \Vert f| H_B\Vert^2.
\end{equation}
\end{proof}
We say that the positive definite, self-adjoint operator $A$ (resp. $\id +B$) with pure point spectrum  is \textit{norm-inducing} in the Hilbert space $\overset{\circ}{W}{^m}(\Omega)$  (resp. $W^m(\Omega)$). In the next proposition we recall the Rayleigh-Ritz variational characterisations of eigenvalues.

\begin{prop}
Let $\Omega\subseteq\rd$ be a bounded Lipschitz domain. As introduced in Proposition \ref{prop_sobolevW^m}, let $\{\phi_k:k\in\nat\}$ be the complete set of orthonormal eigenfunctions of the operator $B$ corresponding to the eigenvalues $\{\mu_k: k\in\nat\}$.  Then it holds
\begin{equation}\label{varchar}
1+\mu_k
=
\sup_{f\in\spa\{\phi_1,...,\phi_k\}}\varrho(f)
=
\inf_{\substack{f\in W^m(\Omega)\\ f\perp\phi_j, j=1,...,k-1}} \varrho(f)
=
\inf_{\substack{U\subset W^m(\Omega)\\\dim U = k}} \sup_{f\in U}\varrho(f)
\end{equation}
where the Rayleigh-quotient is defined by
\[
\varrho(f):=\frac{\Vert f\,| W^m(\Omega)\Vert^2_{(b_\alpha)}}{\Vert f\,|L_2(\Omega)\Vert^2}.
\]
\end{prop}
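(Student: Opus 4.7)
The plan is to convert the Rayleigh quotient into a ratio of weighted Fourier series via the spectral representation from Proposition \ref{prop_sobolevW^m}, and then handle the three equalities by elementary monotonicity and a dimension-count argument. Writing $c_j := \langle f,\phi_j\rangle_{L_2(\Omega)}$, the completeness and orthonormality of $\{\phi_j\}_{j\in\nat}$ in $L_2(\Omega)$ give $\Vert f|L_2(\Omega)\Vert^2 = \sum_j |c_j|^2$, while Proposition \ref{prop_sobolevW^m} yields $\Vert f|W^m(\Omega)\Vert_{(b_\alpha)}^2 = \sum_j (1+\mu_j)|c_j|^2$. Hence for $0\neq f\in W^m(\Omega)$ one has
\[
\varrho(f) \;=\; \frac{\sum_{j=1}^\infty (1+\mu_j)\,|c_j|^2}{\sum_{j=1}^\infty |c_j|^2}.
\]

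For the first equality, any $f\in\spa\{\phi_1,\dots,\phi_k\}\setminus\{0\}$ has $c_j=0$ for $j>k$, so by monotonicity of $(1+\mu_j)$ the quotient is bounded above by $1+\mu_k$, with equality attained at $f=\phi_k$. For the second equality, the orthogonality condition $f\perp\phi_j$ ($j=1,\dots,k-1$) in $L_2(\Omega)$ translates into $c_j=0$ for $j<k$, and the same monotonicity, now applied from below, gives $\varrho(f)\geq 1+\mu_k$, again with equality at $f=\phi_k$.

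For the min-max formula I would argue in two directions. The upper bound $\inf_U\sup_{f\in U}\varrho(f)\leq 1+\mu_k$ is immediate from choosing the admissible test subspace $U=\spa\{\phi_1,\dots,\phi_k\}$ and invoking the first equality. For the matching lower bound I take an arbitrary $k$-dimensional subspace $U\subset W^m(\Omega)$ and consider the linear map $U\to\comp^{k-1}$, $f\mapsto(c_1(f),\dots,c_{k-1}(f))$; by the rank-nullity theorem its kernel contains a non-zero vector $f_\star\in U$, and the second equality then forces $\sup_{g\in U}\varrho(g)\geq\varrho(f_\star)\geq 1+\mu_k$, completing the chain.

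The only real care needed is justification of the termwise manipulations of the series, which is guaranteed since $f\in W^m(\Omega)=H_{B}\oplus\comp$-style energy space ensures absolute convergence of $\sum_j(1+\mu_j)|c_j|^2$; no deep analytic obstacle arises. The single mildly delicate point is the third equality, where one must verify that an arbitrary $k$-dimensional subspace of $W^m(\Omega)$ — not assumed to be spanned by eigenfunctions — intersects the $L_2$-orthogonal complement of $\spa\{\phi_1,\dots,\phi_{k-1}\}$ non-trivially, but this is exactly the rank-nullity argument above.
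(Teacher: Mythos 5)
Your proposal is correct and follows essentially the same route as the paper: both use the spectral expansion $\Vert f|W^m(\Omega)\Vert^2_{(b_\alpha)}=\sum_j(1+\mu_j)|\langle f,\phi_j\rangle_{L_2(\Omega)}|^2$ from Proposition \ref{prop_sobolevW^m} together with monotonicity of the eigenvalues for the first two equalities, and the min-max formula via the test space $\spa\{\phi_1,\dots,\phi_k\}$ plus the existence of a nonzero element of any $k$-dimensional subspace orthogonal to $\phi_1,\dots,\phi_{k-1}$. Your explicit rank-nullity justification of that last existence step is exactly what the paper asserts implicitly, so there is nothing to add.
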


\begin{proof}
\textbf{Step 1.} It holds
\[
1+ \mu_k 
= \sum_{j=1}^\infty (1+\mu_j)|\langle \phi_k,\phi_j\rangle_{L_2(\Omega)}|^2 
=
\Vert \phi_k\,| W^m(\Omega)\Vert^2_{(b_\alpha)}
= 
\varrho(\phi_k)
\leq
\sup_{f\in\spa\{\phi_1,...,\phi_k\}}\varrho(f). 
\]
On the other hand for every $f\in\spa\{\phi_1,...,\phi_k\}$ we have $\langle f,\phi_j\rangle_{L_2(\Omega)}=0$ if $j\geq k+1$ and hence
\[
\Vert f\,|W^m(\Omega)\Vert^2_{(b_\alpha)} = \sum_{j=1}^k (1+\mu_j)|\langle f,\phi_j\rangle_{L_2(\Omega)}|^2 \leq (1+\mu_k)\Vert f\,|L_2(\Omega)\Vert^2.
\] 
The second equality in \eqref{varchar} follows similarly. 
\textbf{Step 2.} Let $U\subset W^m(\Omega)$ be a linear subspace with $\dim U =k$. Then there exists an element $0\neq g\in U$ such that $g\perp \phi_j,j=1,...,k-1.$ It holds
\[
\varrho(g)\geq\inf_{\substack{f\in W^m(\Omega)\\ f\perp\phi_j, j=1,...,k-1}} \varrho(f) = 1+\mu_k.
\]
This shows
\[
\inf_{\substack{U\subset W^m(\Omega)\\\dim U = k}} \sup_{f\in U}\varrho(f)
\geq 1+\mu_k.
\]
The inverse inequality follows with the first equality of \eqref{varchar} by taking $U:=\spa\{\varphi_1,...,\varphi_k\}.$
\end{proof}

Clearly corresponding variational characterisations also hold for the Sobolev space $\overset{\circ}{W}{^m}(\Omega)$. With the introduced notation this is
\begin{equation}\label{varchar_D}
\lambda_k
=
\sup_{f\in\spa\{\psi_1,...,\psi_k\}}\tilde{\varrho}(f)
=
\inf_{\substack{f\in \overset{\circ}{W}{^m}(\Omega)\\ f\perp\psi_j, j=1,...,k-1}} \tilde{\varrho}(f)
=
\inf_{\substack{U\subset \overset{\circ}{W}{^m}(\Omega)\\\dim U = k}} \sup_{f\in U}\tilde{\varrho}(f)
\end{equation}
where
\[
\tilde{\varrho}(f):=\frac{\Vert f\,| \overset{\circ}{W}{^m}(\Omega)\Vert^2_{(a_\alpha)}}{\Vert f\,|L_2(\Omega)\Vert^2}.
\]

\section{Sharp upper bounds for approximation numbers of $\id:\overset{\circ}{W}{^m}(\Omega)\hookrightarrow L_2(\Omega)$}

As described before let the Sobolev space $\overset{\circ}{W}{^m}(\Omega)$ be endowed with the inner product
\begin{equation}
\langle f, g \rangle_{\overset{\circ}{W}{^m}(\Omega),(a_\alpha)}=\sum_{|\alpha|\leq m}a_\alpha\,\langle \Dd^\alpha f, \Dd^\alpha g\rangle_{L_2(\Omega)}
\end{equation}
with coefficients $a_\alpha\in\{0,1\}$ and the norm-inducing operator $A$ be given by
\begin{equation}\label{def_opA}
Af = \sum\limits_{|\alpha|\leq m}{(-1)^{|\alpha|}}a_\alpha \Dd^{2\alpha} f
\end{equation}
satisfying Dirichlet boundary conditions
\begin{equation}\label{def_Dbc}
\frac{\partial^j f}{\partial\nu^j}=0 \text{ on } \partial\Omega, j=0,...,m-1
\end{equation}
where \eqref{def_opA} and \eqref{def_Dbc} are first defined on $C^\infty_0(\Omega)$ and then uniquely extended to $\overset{\circ}{W}{^m}(\Omega)$. We have
\[
\langle f, g \rangle_{\overset{\circ}{W}{^m}(\Omega),(a_\alpha)} = \langle A f, g\rangle_{L_2(\Omega),}\quad f\in\dom A, g\in \overset{\circ}{W}{^m}(\Omega). 
\]
Define a corresponding polynomial
 of the operator $A$ by
\[
a(z):= \sum_{|\alpha|\leq m}a_\alpha z^{2\alpha},\quad z\in\rd,
\]
where $z^{2\alpha}:=z_1^{2\alpha_1}\cdot...\cdot z_d^{2\alpha_d}.$
In view of \eqref{ak_W0^m} we need to analyse the asymptotic behaviour of the eigenvalues of $A$. However many papers are concerned with the investigation on distributions of eigenvalues for elliptic operators. The underlying theory goes back to \textsc{Weyl, Courant} and \textsc{Carleman}. A treatment of the history can be found in \cite{Cla67}. Further extensive developments are due to \textsc{G\r{a}rding, Browder} in the fifties and \textsc{Agmon} a bit later, cf. \cite{Agm65}. The operator $A$ is a differential operator with constant coefficients. Denote by $N(\lambda)$ the number of eigenvalues of $A$ that are less than $\lambda>0$ and 
\[
V(\lambda):=\vol\{z\in\rd: a(z)<\lambda\}.
\]
Then it holds
\begin{equation}\label{tul71}
\lim_{\lambda\to\infty}\frac{N(\lambda)}{V(\lambda)}= \frac{\vol \Omega}{(2\pi)^d}.
\end{equation}
A direct proof of \eqref{tul71} can be found in \cite{Tul71}. We remind of the following lemma.
\begin{lemma}\label{Lemma_V}
Let  $a:\rd\to\real$ be a non-negative function that is homogeneous of order $2m$, i.e. $a(\lambda z)=\lambda^{2m}a(z)\, ,z\in\rd, \lambda\in\real$ and $a(z)=a(|z_1|,...,|z_d|)$ for all $z=(z_1,...,z_d)\in\rd$. Then we have for any $\lambda>0$ 
\begin{equation}
V(\lambda)= V(1)\,\lambda^{\frac{d}{2m}}
\quad\text{and}\quad
\int_{\{z\in\rd: a(z)<\lambda\}} a(z)\dint z =\frac{d}{2m+d}\, V(1)\,\lambda^{\frac{2m+d}{2m}}.
\end{equation}
\end{lemma}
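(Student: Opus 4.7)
My plan is to treat the two identities in turn, with the first one done by a direct change of variables and the second by Cavalieri's formula combined with the first.

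For the first identity, I would exploit homogeneity: since $a(\lambda^{1/(2m)}y) = \lambda\, a(y)$ for $\lambda > 0$ (the symmetry hypothesis $a(z)=a(|z_1|,\dots,|z_d|)$ is what allows the homogeneity relation $a(\lambda z) = \lambda^{2m} a(z)$ to be stated for all $\lambda \in \real$ and in particular guarantees the positive-scalar case we need here), the sublevel set satisfies
\[
\{z\in\rd: a(z)<\lambda\} \;=\; \lambda^{\frac{1}{2m}}\cdot\{y\in\rd: a(y)<1\}.
\]
Taking Lebesgue measure on both sides and pulling out the scaling factor $\lambda^{d/(2m)}$ gives $V(\lambda)=V(1)\lambda^{d/(2m)}$.

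For the second identity, I would apply the layer-cake (Cavalieri) representation
\[
\int_{\{a<\lambda\}} a(z)\,\dint z \;=\; \int_0^{\infty} \vol\bigl(\{z\in\rd:\, t<a(z)<\lambda\}\bigr)\,\dint t.
\]
For $t\ge \lambda$ the integrand vanishes, and for $0\le t<\lambda$ it equals $V(\lambda)-V(t)$. Substituting the closed form $V(t)=V(1)t^{d/(2m)}$ from the first part, the integral becomes
\[
\int_0^{\lambda}\bigl(V(\lambda)-V(t)\bigr)\,\dint t \;=\; V(1)\,\lambda^{\frac{d}{2m}+1}\;-\;V(1)\int_0^{\lambda} t^{\frac{d}{2m}}\,\dint t.
\]

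The remaining step is the routine evaluation $\int_0^\lambda t^{d/(2m)}\dint t = \frac{2m}{2m+d}\lambda^{(2m+d)/(2m)}$, after which collecting terms yields the factor $1-\frac{2m}{2m+d}=\frac{d}{2m+d}$ in front of $V(1)\lambda^{(2m+d)/(2m)}$. I do not anticipate any real obstacle here; the only points that deserve a line of justification are that the level sets $\{a=t\}$ have Lebesgue measure zero for almost every $t$ (which follows because $V$ is monotone and continuous in $\lambda$ by the scaling identity, hence differentiable a.e.) so that the weak and strict sublevel sets may be interchanged freely, and the extension of homogeneity from positive to negative scalars via the symmetry assumption, as noted above.
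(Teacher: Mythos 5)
Your proof is correct, but it takes a genuinely different route from the paper. The paper's proof is a one-line application of the transformation formula with generalized polar coordinates $\varphi(r,y)=ry$, $(r,y)\in(0,\infty)\times S_+$, where $S_+=\{y\in\rd_+:a(y)=1\}$; there the coordinate-wise symmetry $a(z)=a(|z_1|,\dots,|z_d|)$ serves to reduce the integrals to the positive orthant, and homogeneity makes the radial integral explicit, yielding both identities in a single computation. You instead obtain $V(\lambda)=V(1)\lambda^{d/(2m)}$ directly from the dilation identity $\{a<\lambda\}=\lambda^{1/(2m)}\{a<1\}$ together with the scaling of Lebesgue measure, and then deduce the second identity from the layer-cake (Cavalieri) representation combined with the first. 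Your route is more elementary: it requires no surface $S_+$, no Jacobian or coarea-type formula, and in fact no use of the symmetry hypothesis at all, only positive homogeneity and measurability; the paper's route is shorter once the polar-coordinate machinery is granted. Two minor remarks: the positive-scalar relation $a(\lambda^{1/(2m)}y)=\lambda\,a(y)$ is already contained in the homogeneity hypothesis, so the symmetry assumption is not needed to justify it, contrary to your parenthetical comment (its role in the paper is the reduction to $\rd_+$); and for the identification $\vol\{z: t<a(z)<\lambda\}=V(\lambda)-V(t)$ you only need $\vol\{z: a(z)=t\}=0$, which for every $t>0$ follows from monotonicity of $V$ together with the continuity of $t\mapsto V(1)t^{d/(2m)}$ established in the first part --- no appeal to a.e.\ differentiability is needed --- while $t=0$ contributes a null set to the $t$-integration anyway.
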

\begin{proof}
Apply the transformation formula for integrals using polar coordinates $\varphi:(0,\infty)\times S_+\to\rd_+$ defined by
\[
\varphi (r,y)= r\, y
\]
where $S_+=\{y\in\rd_+:a(y)=1 \}.$
\end{proof}
Thus for homogeneous norms of order $2m$ we can rewrite \eqref{tul71} as
\begin{equation}\label{asymptbehaviour}
\lim_{k\to\infty}
\frac{ a_k(\id:\overset{\circ}{W}{^m}(\Omega)\hookrightarrow L_2(\Omega))}{k^{-\frac{m}{d}}}
=
\Big(\frac{\vol\Omega}{(2\pi)^d} \vol A^d_m\Big)^{\frac{m}{d}}
\end{equation}
where 
\begin{equation}\label{A^d_m}
A^d_m:=\{z\in\rd: a(z)<1\}.
\end{equation}
If $m=1$ and $a(z)=|z|^2$ this coincides with the famous Weyl formula \cite{Wey12} for eigenvalues of the Dirichlet-Laplace operator.
The next theorem gives sharp estimates with optimal constants.
\begin{theo}\label{theo_LiYau}
Let $\Omega\subset\rd$ be a bounded domain and $a_k(\id)$ denote the $k^{\textup{th}}$ approximation number of the compact embedding
$
\id:\overset{\circ}{W}{^m}(\Omega)\hookrightarrow L_2(\Omega).$
Assume that the Sobolev norm is of the form
\[\Vert f| \overset{\circ}{W}{^m}(\Omega)\Vert_{(a_\alpha)}:= \Big(\sum\limits_{|\alpha|= m}a_\alpha \Vert\Dd^\alpha f|L_2(\Omega)\Vert^2  \Big)^{\frac{1}{2}},\qquad a_\alpha\in\{0,1\},\]
and consider the homogeneous polynomial $a(z)=\sum\limits_{|\alpha|= m}a_\alpha z^{2\alpha}$ of order $2m.$ Then for all $k\in\nat$
\begin{equation}
a_k(\id) \leq \sqrt{\frac{2m+d}{d}}\cdot\Big(\frac{\vol\Omega}{(2\pi)^d} \vol A^d_m\Big)^{\frac{m}{d}}\cdot k^{-\frac{m}{d}}.
\end{equation}
\end{theo}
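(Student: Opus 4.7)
The plan is to establish the equivalent statement
\[
\lambda_k \;\geq\; \frac{d}{2m+d}\,\Big(\frac{\vol\Omega}{(2\pi)^d}\,\vol A^d_m\Big)^{-\frac{2m}{d}}\,k^{\frac{2m}{d}},
\]
which, via Proposition \ref{prop_rell}, is equivalent to the claimed bound on $a_k(\id)$. I would follow the classical Li--Yau strategy, which is particularly well suited here because the norm-inducing operator $A$ has constant coefficients with symbol exactly $a(z)$.

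The first step is to extend the $L_2$-orthonormal eigenfunctions $\psi_1,\ldots,\psi_N$ by zero outside $\Omega$ and pass to the Fourier side. Setting $F(z):=\sum_{k=1}^N |\mathcal{F}\psi_k(z)|^2$, Plancherel gives $\int_{\rd} F(z)\,\dint z = N$, and testing Bessel's inequality for the orthonormal system $\{\psi_k\}$ in $L_2(\Omega)$ against the functions $x\mapsto (2\pi)^{-d/2}e^{ix\cdot z}\chi_\Omega(x)$ yields the pointwise bound $F(z)\leq \vol\Omega/(2\pi)^d$. On the other hand, since $\widehat{\Dd^{2\alpha}\psi_k}(z)=(-1)^{|\alpha|}z^{2\alpha}\mathcal{F}\psi_k(z)$, the operator $A$ has Fourier symbol precisely $a(z)=\sum_{|\alpha|=m}a_\alpha z^{2\alpha}$, so
\[
\sum_{k=1}^N \lambda_k \;=\; \sum_{k=1}^N \langle A\psi_k,\psi_k\rangle_{L_2(\Omega)} \;=\; \int_{\rd} a(z)\,F(z)\,\dint z.
\]

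The second step is the bathtub principle: among all measurable $F:\rd\to[0,\vol\Omega/(2\pi)^d]$ with $\int F = N$, the integral $\int a(z)F(z)\,\dint z$ is minimised by the layer function $F^*(z)=\vol\Omega/(2\pi)^d\cdot\chi_{\{a(z)<R\}}$, where $R$ is chosen so that $\int F^*=N$. Since $a$ is homogeneous of order $2m$ and even in each variable, Lemma \ref{Lemma_V} gives $V(R)=\vol A^d_m\cdot R^{d/2m}$, so the normalisation forces $R=\bigl((2\pi)^d N/(\vol\Omega\cdot\vol A^d_m)\bigr)^{2m/d}$, and the same lemma yields
\[
\int_{\{a<R\}} a(z)\,\dint z \;=\; \frac{d}{2m+d}\,\vol A^d_m\cdot R^{\frac{2m+d}{2m}}.
\]

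Combining these gives the Li--Yau-type sum bound $\sum_{k=1}^N\lambda_k \geq \frac{d}{2m+d}\bigl(\frac{\vol\Omega}{(2\pi)^d}\vol A^d_m\bigr)^{-2m/d}N^{(2m+d)/d}$, and since the $\lambda_k$ are nondecreasing, $N\lambda_N$ dominates the left-hand side, producing the desired lower bound on $\lambda_N$ and hence the upper bound on $a_N(\id)$. The only delicate point I anticipate is the justification of the two Fourier-side identities above --- namely Plancherel applied to the extended eigenfunctions and the identification $\langle A\psi_k,\psi_k\rangle=\int a|\mathcal{F}\psi_k|^2$ --- which requires that one interpret $A$ on the energy space $\overset{\circ}{W}{^m}(\Omega)$ as the quadratic form $\sum_{|\alpha|=m}a_\alpha\Vert \Dd^\alpha\psi_k\Vert_{L_2}^2$ and exploit that zero extension from $\overset{\circ}{W}{^m}(\Omega)$ to $H^m(\rd)$ is isometric for the homogeneous part of the norm; once this is in place, the rest is a clean bathtub argument together with Lemma \ref{Lemma_V}.
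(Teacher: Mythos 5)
Your proposal is correct and follows essentially the same route as the paper: the Li--Yau argument via Bessel's inequality for $F(z)=\sum_j|\mathcal{F}\psi_j(z)|^2$, the symbol identity $\sum_j\lambda_j=\int a(z)F(z)\,\dint z$, a bathtub-type rearrangement step, and Lemma \ref{Lemma_V}, finishing with $\lambda_k\geq\frac1k\sum_{j\leq k}\lambda_j$. The only cosmetic difference is that you normalise the layer function $F^*$ by its mass $\int F^*=k$, while the paper's Lemma \ref{lemma1} fixes $R$ through $\int_{\{a<R\}}a=M_2/M_1$ and bounds $\int F$ instead; these are equivalent formulations of the same bathtub principle and lead to identical constants.
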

In view of \eqref{asymptbehaviour} this upper bound for the approximation numbers is asymptotically sharp for $d\to\infty$. The proof of Theorem \ref{theo_LiYau} is inspired by \textsc{Li/Yau} \cite{LiYau83} what can be seen as an equivalent to our situation if $m=1$ and $a(z)=|z|^2.$ We start with the following lemma. 
\begin{lemma}\label{lemma1}
Let $a,f:\rd\to\real$ be non-negative functions such that there exist constants $M_1,M_2>0$ with
\[
0\leq f\leq M_1 \quad \text{ and }\quad \int_{\rd} a(z)f(z)\dint z \leq M_2.
\]
Then for every $R>0$ such that
\[
\int_{\{z\in\rd: a(z)<R\}}a(z)\dint z = \frac{M_2}{M_1}
\]
it holds
\[
\int_{\rd}f(z)\dint z \leq M_1 \vol\big\{z\in\rd: a(z)<R\big\}.
\]
\end{lemma}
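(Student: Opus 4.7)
My plan is to use the classical bathtub principle: among all $f$ satisfying $0\le f\le M_1$ and $\int af\le M_2$, the total mass $\int f$ is maximised by taking $f=M_1$ on the sub-level set of $a$ that uses up the entire ``budget'' $M_2$ on the weight $a$. Set $B_R:=\{z\in\rd:a(z)<R\}$, which is precisely the set appearing in the hypothesis with $\int_{B_R}a\,\dint z=M_2/M_1$.

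The central step is the sign-selection trick. I would consider the auxiliary integral
\[
I:=\int_{\rd}\bigl(f(z)-M_1\,\mathbf{1}_{B_R}(z)\bigr)\bigl(R-a(z)\bigr)\,\dint z.
\]
On $B_R$ the first factor is $f-M_1\le 0$ and the second factor $R-a\ge 0$; on the complement $B_R^{c}$ the first factor is $f\ge 0$ and the second factor $R-a\le 0$. Hence the integrand is pointwise $\le 0$, so $I\le 0$.

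Now I expand $I$ and use the two constraints. The $a$-weighted part yields
\[
\int_{\rd} a(z)f(z)\,\dint z - M_1\int_{B_R} a(z)\,\dint z \;\le\; M_2 - M_1\cdot\tfrac{M_2}{M_1}\;=\;0,
\]
so the linear-in-$R$ part of $I\le 0$ gives
\[
R\left(\int_{\rd}f\,\dint z - M_1\,\vol B_R\right) \;\le\; \int_{\rd} af\,\dint z - M_1\int_{B_R} a\,\dint z \;\le\;0.
\]
Dividing by $R>0$ yields the desired inequality $\int_{\rd}f\,\dint z\le M_1\,\vol B_R$.

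I do not foresee a real obstacle: the only thing to check carefully is that $R$ exists and is finite, which follows from the assumed homogeneity context in Lemma \ref{Lemma_V} (making $\lambda\mapsto\int_{\{a<\lambda\}}a\,\dint z$ continuous and surjective onto $(0,\infty)$), but the lemma itself is stated with $R$ given, so no existence issue arises. The whole argument is a two-line rearrangement once the ``opposite signs'' trick for the integrand of $I$ is in place.
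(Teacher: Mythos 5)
Your argument is correct and is essentially the paper's own proof: the paper sets $g=M_1\mathbf{1}_{\{a<R\}}$ and integrates the pointwise inequality $(a(z)-R)(f(z)-g(z))\geq 0$, which is exactly your sign-selection integrand $I\leq 0$ with the signs flipped, followed by the same rearrangement using $\int_{\{a<R\}}a\,\dint z=M_2/M_1$. No gaps; the existence of $R$ is indeed not part of the lemma, just as you note.
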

\begin{proof}
For $z\in\rd$ define
\[
g(z):=\begin{cases} M_1\quad& \text{if }a(z)<R,\\
0\quad& \text{if }a(z)\geq R.\end{cases}
\]
Then $(a(z)-R)(f(z)-g(z))\geq 0.$ Integrating gives
\[
R\int_{\rd}(f(z)-g(z))\dint z
\leq 
\int_{\rd}a(z)(f(z)-g(z))\dint z 
\leq
M_2 - \int_{\rd}a(z)g(z)\dint z =0.
\]
This shows
\[
\int_{\rd}f(z)\dint z
\leq
\int_{\rd}g(z)\dint z = \int_{\{z\in\rd: a(z)<R\}}M_1 \dint z.
\]
\end{proof}
\noindent
Recall that for every $f\in C^\infty_0(\Omega)$ and $z=(z_1,...,z_d)\in\rd$ it holds
\[
z^{\alpha}(\mathcal{F}f)(z) 
=
(-i)^{|\alpha|}\big[\mathcal{F}(\Dd^{\alpha} f)\big](z), \qquad \alpha\in\no^d.
\]
Using Plancherel's Theorem this shows
\begin{equation}\label{planch}
\langle f, g \rangle_{\overset{\circ}{W}{^m}(\Omega),(a_\alpha)} = \int_{\rd} a(z)\mathcal{F}f(z) \overline{\mathcal{F}g(z)}\dint z\qquad \forall f,g\in C^\infty_0(\Omega).
\end{equation}
The last identity is crucial to prove Theorem \ref{theo_LiYau}.

\begin{proof}[Proof of Theorem \ref{theo_LiYau}]
Let $\{\psi_j: j=1,...,k\}$ be the set of orthonormal eigenfunctions for the eigenvalues $\{\lambda_j: j=1,...,k\}$ of the norm-inducing operator $A$. We consider the function defined by
\[
\Phi(x,y):= \sum_{j=1}^k \psi_j(x)\psi_j(y),\quad x,y\in\Omega.
\]
The Fourier transform of $\Phi$ in the $x$-variable is given by
\[
\hat{\Phi}(z,y)
=
\sum_{j=1}^k (\mathcal{F}\psi_j)(z)\psi_j(y)
=
\sum_{j=1}^k (2\pi)^{-\frac{d}{2}}\int_\Omega \psi_j(x)e^{-ix\cdot z}\dint x\, \psi_j(y).
\]
By definition this is nothing more than a multiple by $(2\pi)^{-\frac{d}{2}}$ of the projection $Ph_z$ of the function $h_z(x)=e^{-ix\cdot z}$ onto the linear subspace spanned by the first $k^\textup{th}$ eigenfunctions $\psi_1,...,\psi_k$, i.e.
\[
\hat{\Phi}(z,y)= (2\pi)^{-\frac{d}{2}} \sum_{j=1}^k \langle h_z, \psi_j\rangle_{L_2(\Omega)} \psi_j(y)=(2\pi)^{-\frac{d}{2}} (Ph_z)(y).
\]
We apply Lemma \ref{lemma1} to the function
\[
f(z):=\int_{\rd}|\hat{\Phi}(z,y)|^2\dint y,\quad z\in\rd
\]
which is the $L_2$-norm of $(2\pi)^{-\frac{d}{2}} Ph_z.$
It follows with Bessel's inequality 
\[
0\leq f(z) = (2\pi)^{-d}\sum_{j=1}^k |\langle h_z,\psi_j\rangle_{L_2(\Omega)} |^2 \leq (2\pi)^{-d} \Vert h_z| L_2(\Omega)\Vert^2 = \frac{\vol\Omega}{(2\pi)^d} =:M_1.
\]
Furthermore
\[
\int_{\rd}a(z)f(z)\dint z 
= 
\sum_{j=1}^k \int_{\rd} a(z)|\mathcal{F}\psi_j(z)|^2\dint z
= 
\sum_{j=1}^k \langle A\psi_j,\psi_j\rangle_{L_2(\Omega)} 
= 
\sum_{j=1}^k \lambda_j=: M_2
\]
where we used \eqref{planch}. Now if $R>0$ is chosen such that
\[
\tfrac{d}{2m+d}\, R^{\frac{2m+d}{2m}} \vol A_m^d = \frac{M_2}{M_1}
\ \iff\
R^{\frac{d}{2m}}= \Big(\sum_{j=1}^k \lambda_j\Big)^{\frac{d}{2m+d}}\Big( \tfrac{d}{2m+d}\, M_1 \vol A^d_m \Big)^{-\frac{d}{2m+d}}
\]
we conclude with Lemma \ref{lemma1} that
\[
k 
\leq 
M_1  R^{\frac{d}{2m}}\vol A_m^d 
= 
\big(M_1 \vol A^d_m\big)^{\frac{2m}{2m+d}}\, \Big(\sum_{j=1}^k \lambda_j\Big)^{\frac{d}{2m+d}} \big( \tfrac{d}{2m+d}\big)^{-\frac{d}{2m+d}},
\]
where we used
\[
\int_{\rd}f(z)\dint z = \sum_{j=1}^k\int_{\rd}|\mathcal{F}\psi_j(z)|^2\dint z = k.
\]
Hence
\[
\sum_{j=1}^k\lambda_j 
\geq
\tfrac{d}{2m+d}\, (M_1 \vol A^d_m)^{-\frac{2m}{d}}\, k^{\frac{2m+d}{d}}
\]
and obviously
\[
a_k(\id)^{-2}=\lambda_k\geq \frac{1}{k} \sum_{j=1}^k\lambda_j  \geq \tfrac{d}{2m+d}\, (M_1  \vol A^d_m)^{-\frac{2m}{d}}  k^{\frac{2m}{d}}.
\]
\end{proof}

\begin{exams}{Let $Q=(0,2\pi)^d$ be the $d$-dimensional cube with side length $2\pi$.}
\begin{itemize}
\item[1.]
The Sobolev norm 
\[
\Vert f| \overset{\circ}{W}{^m}(Q)\Vert ^* := \Big(\sum\limits_{j=1}^d \Vert \partial^m_jf|L_2(Q)\Vert^2\Big)^{1/2}
\]
is induced by the operator
$(-1)^m\sum\limits_{j=1}^d \partial^{2m}_j$ with corresponding Dirichlet boundary conditions. The corresponding polynomial is $a(z)= \sum\limits_{j=1}^d z_j^{2m}=\Vert z|\ell_{2m}^d\Vert^{2m}.$ Hence the Lebesgue measure of the set $A^{d,*}_m=\left\{z\in\rd: \Vert z|\ell_{2m}^d\Vert<1 \right\}=B^d_{2m}$ can be estimated as in \cite[Lemma 4.10]{KSU14} by
\[
(\vol A^{d,*}_m)^{\frac{m}{d}} \leq 2^m \Big(\frac{2em}{d}\Big)^{\frac{1}{2}}
\]
Therefore we have
\begin{equation}\label{up1}
a_k(\id:\overset{\circ}{W}{^{m,*}}(Q)\hookrightarrow L_2(Q))
\leq
\sqrt{\frac{2m+d}{d}} 2^m \Big(\frac{2em}{d}\Big)^{\frac{1}{2}} \cdot k^{-\frac{m}{d}},\quad k\in\nat.
\end{equation}
This upper bound slightly improves the upper bound in the periodic case given in \cite{KSU14}[Theorem 4.12], namely by the factor $\sqrt{\frac{2m+d}{d}}\,2^m$ instead of $4^m$. Furthermore \eqref{up1} holds for all $k\in\nat$ and not only for large $k\geq k_0(d,m).$
\item[2.]
Consider the Sobolev norm
\[
\Vert f| \overset{\circ}{W}{^m}(Q)\Vert ^+
:=
\Big(\sum\limits_{j_1=1}^d...\sum\limits_{j_m=1}^d \Vert \partial_{j_1}...\partial_{j_m} f|L_2(\Omega)\Vert^2\Big)^{\frac{1}{2}}
=
\Big(\int_{\rd}|z|^{2m}|\mathcal{F}f(z)|^2\dint z\Big)^{\frac{1}{2}}.
\]
The norm-inducing operator is $(-\Delta)^m= (-1)^m\sum\limits_{j_1=1}^d...\sum\limits_{j_m=1}^d \partial^2_{j_1}...\partial^2_{j_m}$ and $a(z)=|z|^{2m}$. Correspondingly the set $A^{d,+}_m = \vol\{z\in\rd: |z|<1\}$ is the unit ball in $\rd$. In particular 
\[
(\vol A^{d,+}_m)^{\frac{m}{d}} = \Big(\frac{\pi^{\frac{d}{2}}}{\Gamma(\frac{d}{2}+1)}\Big)^{\frac{m}{d}} 
\leq 
\Big(\frac{2\pi e}{d} \Big)^{\frac{m}{2}}
\]
where we used $\Gamma(1+x)\geq \left(\frac{x}{e}\right)^x,x\geq 0.$ Therefore we conclude
\begin{equation}
a_k(\id:\overset{\circ}{W}{^{m,+}}(Q)\hookrightarrow L_2(Q))
\leq
\sqrt{\frac{2m+d}{d}}\pi^{m/2}\cdot \Big(\frac{2 e}{d} \Big)^{m/2}\cdot k^{-m/d},\quad k\in\nat.
\end{equation}
Again this upper estimate replaces the factor $4^m$ from the periodic case \cite[Theorem 4.15]{KSU14} by $\sqrt{\frac{2m+d}{d}}\pi^{\frac{m}{2}}$ and holds for all $k\in\nat$.
\item[3.]
The introduced approach applies to fractional Sobolev space $\overset{\circ}{H}{^{s,p}}(\Omega), s>0,$ defined as the completion of $C^\infty_0(\Omega)$ with respect to the norm
\[
\Vert f| \overset{\circ}{H}{^{s,p}}(\Omega)\Vert := \Big(\int_{\rd}a_{s,p}(z)|\mathcal{F}f(z)|^2\dint z\Big)^{\frac{1}{2}}
\]
where $a_{s,p}(z)=\Big(\sum\limits_{j=1}^d |z_j|^p\Big)^{\frac{2s}{p}}, 0<p<\infty$. As a result we get for all $k\in\nat$
\[
a_k(\id:\overset{\circ}{H}{^{s,p}}(\Omega)\hookrightarrow L_2(\Omega) ) 
\leq 
\sqrt{\frac{2s+d}{d}}\cdot \left(\frac{\vol \Omega}{(2\pi)^d}\right)^{\frac{s}{d}} \cdot 
\begin{cases}
2^s\Big(\frac{ep}{d} \Big)^{\frac{s}{p}} \cdot k^{-\frac{s}{d}}\quad &\text{if } p\geq 1\\
2^s\Big(\frac{e(p+1)}{d} \Big)^{\frac{s}{p}} \cdot k^{-\frac{s}{d}}\quad &\text{if } 0<p<1.\\
\end{cases}
\]
\end{itemize}
\end{exams}

\section{Sharp lower bounds for approximation numbers of $\id:W^m(\Omega)\hookrightarrow L_2(\Omega)$}

\begin{theo}\label{theo_kroe}
Let $\Omega\subset\rd$ be a bounded Lipschitz domain. Suppose that $a_k(\id)$ denotes the $k^{\textup{th}}$ approximation number of the compact embedding $\id:W^m(\Omega)\hookrightarrow L_2(\Omega)$. Assume that the Sobolev norm is of the form
\[
\Vert f| W^m(\Omega)\Vert_{(b_\alpha)}:=  \bigg(\Vert f| L_2(\Omega)\Vert^2+\sum\limits_{|\alpha|= m}b_\alpha \Vert\Dd^\alpha f|L_2(\Omega)\Vert^2  \bigg)^{\frac{1}{2}}, \qquad b_\alpha\in\{0,1\},
\] 
and consider the homogeneous polynomial $a(z)=\sum\limits_{|\alpha|= m}b_\alpha z^{2\alpha}$ of order $2m.$
Then for all $k\in\nat$
\begin{equation}\label{Kroe_lowBd}
a_{k+1}(\id) 
\geq 
\left[1\,+\, \Big(\frac{2m}{2m+d}\Big)^{-\frac{2m}{d}}\, \Big(\frac{\vol\Omega}{(2\pi)^d}\,\vol A^d_m \Big)^{-\frac{2m}{d}} \cdot k^{\frac{2m}{d}}\right]^{-\frac{1}{2}}
\end{equation}
where $A^d_m=\{z\in\rd: a(z)<1\}.$
\end{theo}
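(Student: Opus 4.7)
\medskip

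The plan is to reduce the claim, via \eqref{ak} in Proposition~\ref{prop_sobolevW^m}, to the Neumann-type eigenvalue bound
\[
\mu_{k+1} \leq \Big(\tfrac{2m}{2m+d}\Big)^{-\frac{2m}{d}} \Big(\tfrac{\vol\Omega}{(2\pi)^d}\vol A^d_m\Big)^{-\frac{2m}{d}} k^{\frac{2m}{d}},
\]
and then prove this Kr\"oger-type upper bound on $\mu_{k+1}$ by a trial-function argument that is, in a sense, dual to the Li--Yau argument used for Theorem~\ref{theo_LiYau}. Let $\mathfrak{b}(f,g):=\sum_{|\alpha|=m} b_\alpha \langle \Dd^\alpha f,\Dd^\alpha g\rangle_{L_2(\Omega)}$, the closed form associated with $B$, and let $\{\phi_j\}$ be the orthonormal Neumann eigenbasis with eigenvalues $\{\mu_j\}$. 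From the second equality of \eqref{varchar}, for any $f\in W^m(\Omega)$ with $f\perp \phi_1,\dots,\phi_k$ in $L_2(\Omega)$ one has
\[
\mu_{k+1}\,\Vert f|L_2(\Omega)\Vert^2 \;\leq\; \mathfrak{b}(f,f).
\]
The whole game is to choose such $f$ cleverly.

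For each $z\in\rd$ set $e_z(x):=e^{ix\cdot z}$, $c_j(z):=\langle e_z,\phi_j\rangle_{L_2(\Omega)}$, and consider the Kr\"oger test function
\[
h_z(x) := e_z(x) - \sum_{j=1}^k c_j(z)\,\phi_j(x),
\]
which by construction is orthogonal to $\phi_1,\dots,\phi_k$. Pythagoras gives $\Vert h_z|L_2(\Omega)\Vert^2 = \vol\Omega - \sum_{j=1}^k |c_j(z)|^2$. For the form, using $\Dd^\alpha e_z = (iz)^\alpha e_z$ one obtains $\mathfrak{b}(e_z,e_z) = a(z)\,\vol\Omega$; and since $\phi_j\in\dom B$, the symmetry of $\mathfrak{b}$ yields $\mathfrak{b}(e_z,\phi_j) = \langle e_z, B\phi_j\rangle_{L_2(\Omega)} = \mu_j c_j(z)$. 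Expanding $\mathfrak{b}(h_z,h_z)$ and collecting terms, the cross terms combine with $\mathfrak{b}(\sum c_j\phi_j,\sum c_j\phi_j)=\sum \mu_j|c_j|^2$ to yield
\[
\mathfrak{b}(h_z,h_z) = a(z)\,\vol\Omega - \sum_{j=1}^k \mu_j |c_j(z)|^2.
\]

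Applying the variational inequality to $h_z$ and integrating over the sublevel set $J:=\{z\in\rd:a(z)<R\}$ (for $R>0$ to be chosen), the nonpositive term $-\mu_j|c_j|^2$ on the right can be dropped, and Parseval's identity bounds the projection part on the left via
\[
\sum_{j=1}^k \int_J |c_j(z)|^2\,\dint z \;\leq\; \sum_{j=1}^k \int_{\rd}|c_j(z)|^2\,\dint z \;=\; (2\pi)^d k.
\]
This produces
\[
\mu_{k+1}\bigl[\vol\Omega\cdot\vol J - (2\pi)^d k\bigr] \;\leq\; \vol\Omega\int_J a(z)\,\dint z.
\]
By Lemma~\ref{Lemma_V}, $\vol J = R^{d/(2m)}\vol A^d_m$ and $\int_J a\,\dint z = \tfrac{d}{2m+d} R^{(2m+d)/(2m)}\vol A^d_m$. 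Setting $u:=R^{d/(2m)}$ and minimizing the right-hand side gives the critical value $u^\ast = \tfrac{2m+d}{2m}\,\tfrac{(2\pi)^d k}{\vol\Omega\,\vol A^d_m}$, at which the denominator equals $\tfrac{d}{2m}(2\pi)^d k$; substituting back collapses the constants to $\bigl(\tfrac{2m+d}{2m}\bigr)^{2m/d}$ and yields exactly the target bound on $\mu_{k+1}$, from which \eqref{Kroe_lowBd} follows by $a_{k+1}(\id)=(1+\mu_{k+1})^{-1/2}$.

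The main delicate point is justifying the identity $\mathfrak{b}(e_z,\phi_j)=\mu_j c_j(z)$: it relies on $\phi_j\in\dom B$ and on $e_z|_\Omega\in W^m(\Omega)$ (both immediate, $e_z$ being smooth and $\Omega$ bounded), together with the fact that the sesquilinear form $\mathfrak{b}$ extends to $W^m(\Omega)\times\dom B$ and represents $B$ there. Everything else is computation; choosing $J$ as a sublevel set of $a$ rather than a Euclidean ball is what makes the polynomial $a$ enter in precisely the combination $\vol A^d_m$ demanded by the theorem, mirroring the role played by $A^d_m$ in Theorem~\ref{theo_LiYau}.
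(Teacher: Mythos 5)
Your proposal is correct and follows essentially the same route as the paper's proof: the same Kr\"oger-type test functions $e_z-\sum_{j\le k}\langle e_z,\phi_j\rangle\phi_j$, the same integration over the sublevel set $\{a(z)<R\}$ with Plancherel bounding $\sum_j\int|c_j|^2$ by $(2\pi)^dk$, Lemma~\ref{Lemma_V} for the volume and the integral of $a$, and the same optimal choice of the sublevel set (your $u^\ast$ is exactly the paper's $r^d$ with $\gamma=\tfrac{2m+d}{2m}$). The only cosmetic differences are that you justify the vanishing cross terms via the form identity $\mathfrak{b}(e_z,\phi_j)=\mu_j c_j(z)$ rather than the eigenfunction expansion of the $W^m$-inner product, and you optimize in $u$ directly instead of introducing the parameter $\gamma$.
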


Note that the expression $\Big(\frac{2m}{2m+d}\Big)^{m/d}$ tends to $1$ as $d\to\infty$.

\begin{proof}
The idea of the proof goes back to \textsc{Kröger} \cite{Kroe92} where the author considered the Neumann-Laplacian operator what can be seen as an equivalent to our situation in the case $m=1$ and $a(z)=|z|^2.$ We also refer to \cite{Lap97}. Let $\{\phi_j:j=1,...,k\}$ be the set of orthonormal eigenfunctions for the eigenvalues $\{\mu_j: j=1,...,k\}$ of the norm-inducing operator $B$ according to Proposition \ref{prop_sobolevW^m}. For $z\in\rd$ let the function
\[
h_z(x):=\begin{cases} 
e^{-ix\cdot z},\quad & x\in \Omega\\
0,\quad & x\notin \Omega.
\end{cases}
\]
We consider the function defined by
\[
\Phi(x,y):= \sum_{j=1}^k \phi_j(x)\phi_j(y),\quad x,y\in\Omega.
\]
The Fourier transform of $\Phi$ in the $x$-variable can be given in terms of the orthogonal projection $Ph_z$ of $h_z$ onto the subspace $\spa\{\phi_1,...,\phi_k\}$. Indeed
\[
\hat{\Phi}(z,y)
=
\sum_{j=1}^k (\mathcal{F}\phi_j)(z)\phi_j(y)
=
\sum_{j=1}^k (2\pi)^{-\frac{d}{2}}\langle h_z, \phi_j\rangle_{L_2(\Omega)} \phi_j(y)
=
(2\pi)^{-\frac{d}{2}} (Ph_z)(y).
\]
Due to the Rayleigh-Ritz variational formula \eqref{varchar} an upper bound for $\mu_{k+1}$ is given by 
\[
1+\mu_{k+1}
\leq
\frac{\int\limits_{B_r} \Vert h_z-Ph_z |W^m(\Omega)\Vert^2_{(b_\alpha)}\dint z}{\int\limits_{B_r} \Vert h_z-Ph_z |L_2(\Omega)\Vert^2 \dint z}
\] 
where we put $B_r:=\{z\in\rd: a(z)<r^{2m}\}$ for an arbitrary $r>0.$ We have
\begin{align*}
\Vert h_z-Ph_z |W^m(\Omega)\Vert^2_{(b_\alpha)} 
&=
\Vert h_z|W^m(\Omega)\Vert^2_{(b_\alpha)} - 2 \Re\text{e} \langle h_z - Ph_z,Ph_z\rangle_{W^m(\Omega),(b_\alpha)} - \Vert Ph_z|W^m(\Omega)\Vert^2_{(b_\alpha)} \\
&=
\Vert h_z|W^m(\Omega)\Vert^2_{(b_\alpha)} - \Vert Ph_z|W^m(\Omega)\Vert^2_{(b_\alpha)}\\
&=
\vol\Omega\,(1+a(z)) - \sum_{j=1}^k (1+ \mu_j)|\langle h_z,\phi_j\rangle_{L_2(\Omega)}|^2
\end{align*}
where the second term on the right-hand side in the first line vanishes since
\begin{align*}
\langle h_z - Ph_z,Ph_z\rangle_{W^m(\Omega),(a_\alpha)}
= 
\sum_{j=1}^\infty(1+\mu_j)
\underbrace{\langle h_z - Ph_z,\phi_j\rangle_{L_2(\Omega)}}_{=0 \text{ if } 1\leq j\leq k}\underbrace{\overline{\langle Ph_z,\phi_j\rangle_{L_2(\Omega)}}}_{=0\text{ if } j>k} =0.
\end{align*}
It follows that
\begin{equation}\label{xeq1}
\mu_{k+1}
\leq 
\frac{\vol\Omega\int\limits_{B_r} a(z)\dint z - \sum\limits_{j=1}^k\mu_j\int\limits_{B_r}|\langle h_z,\phi_j\rangle_{L_2(\Omega)}|^2\dint z}{\vol\Omega\int\limits_{B_r} \dint z - \sum\limits_{j=1}^k\int\limits_{B_r}|\langle h_z,\phi_j\rangle_{L_2(\Omega)}|^2\dint z}.
\end{equation}
Note that $\mu_j\geq 0$ and $\int_{B_r}|\langle h_z,\phi_j\rangle_{L_2(\Omega)}|^2\dint z \leq (2\pi)^d$ for any $j\in\nat.$ 
We choose $r>0$ such that the denominator in \eqref{xeq1} is positive by requiring 
\begin{align*}
(2\pi)^d\, k < \vol\Omega\int_{B_r}\dint z = \vol\Omega\cdot \vol A^d_m\cdot r^d.
\end{align*}
In particular we assume 
\begin{align*}
r^d:=  \frac{(2\pi)^d}{\vol\Omega}\, \frac{1}{\vol A^d_m}\, k\cdot \gamma
\end{align*}
for some $\gamma > 1.$ Then it follows from \eqref{xeq1} that
\begin{align}
\notag
\mu_{k+1}\cdot\vol\Omega\int_{B_r}\dint z 
&\leq
\vol\Omega\int_{B_r}a(z)\dint z + \sum_{j=1}^k(\mu_{k+1}-\mu_j)\int_{B_r}|\langle h_z,\phi_j\rangle_{L_2(\Omega)}|^2\dint z\\ \label{mu_k+1}
&\leq
\vol\Omega\int_{B_r}a(z)\dint z + (2\pi)^d k\cdot\mu_{k+1}.
\end{align}
Using Lemma \ref{Lemma_V} we obtain
\begin{align*}
\mu_{k+1}
\leq
\left(\frac{\vol \Omega}{(2\pi)^d}\right)^{-\frac{2m}{d}}\frac{d}{2m+d}  \big(\vol A^d_m\big)^{-\frac{2m}{d}}\, k^{\frac{2m}{d}}\cdot \frac{\gamma^{\frac{2m+d}{d}}}{\gamma-1}.
\end{align*}
Finally the optimal choice of $\gamma=\frac{2m+d}{2m}>1$ leads to \eqref{Kroe_lowBd} since $a_{k+1}=(1+\mu_{k+1})^{-\frac{1}{2}}$.
\end{proof}

\begin{rem}
In \eqref{mu_k+1} one can also chose $r^d:=  \frac{(2\pi)^d}{\vol\Omega}\, \frac{1}{\vol A^d_m}\,(k+1)$ to get sharp upper estimates even for the sum of the first $k$ eigenvalues, namely it comes out
\begin{equation}
\sum_{j=1}^k\mu_j\leq \frac{d}{2m+d}\Big( \frac{\vol\Omega}{(2\pi)^d}\vol A^d_m\Big)^{-\frac{2m}{d}} k^{\frac{2m+d}{d}}.
\end{equation}
\end{rem}

\section{Approximation in $W^{m,*}([0,L]^d)$}

Let $Q=[0,L]^d$ be the $d$-dimensional cube with side length $L>0$. Let us consider the Sobolev norm
\begin{equation}\label{def_Sobnorm*}
\Vert f| W^m(Q)\Vert ^* := \Big(\Vert f|L_2(Q)\Vert^2\,+\,\sum\limits_{j=1}^d \Vert \partial^m_jf|L_2(Q)\Vert^2\Big)^{1/2}
\end{equation}
where only the highest partial derivatives in each direction are considered. In order to analyse the behaviour of the approximation numbers $a_k(W^{m,*}(Q)\hookrightarrow L_2(Q))$ we need to study the eigenvalues of the norm-inducing operator 
\[
\mathcal{B}^* f:= (-1)^m\sum_{j=1}^d \partial^{2m}_jf,\qquad f\in C^{m,\nu}(Q),
\]
cf. Proposition \ref{prop_sobolevW^m}. The special characteristic here lays in the fact that the eigenfunctions of this norm-inducing operator are tensor products of the correspondig one-dimensional ones. This fact admits the use of techniques from \cite{KSU14} where the periodic case is treated. Therefor we have a closer look on the one-dimensional setting first. Consider the eigenvalue problem of the univariate polyharmonic operator on the interval $[0,L]$ equipped with homogeneous Dirichlet boundary conditions
\begin{equation}\label{1dim_Dbc}
\begin{split}
(-1)^m \psi^{(2m)}(t)=\lambda\, \psi(t),\qquad & t\in[0,L]\\
\psi^{(j)}(0)=\psi^{(j)}(L)=0,\qquad		& j=0,...,m-1
\end{split}
\end{equation}
where $\psi\in C^\infty(0,L), \lambda>0$ or respectively with homogeneous Neumannn boundary conditions
\begin{equation}\label{1dim_Nbc}
\begin{split}
(-1)^m \phi^{(2m)}(t)=\mu\, \phi(t),\qquad & t\in[0,L]\\
\phi^{(j)}(0)=\phi^{(j)}(L)=0,\qquad		& j=m,...,2m-1
\end{split}
\end{equation}
where $\phi\in C^\infty(0,L), \mu\geq 0$. First we collect some observations and properties. We have
\begin{equation}
\lambda=0 \textit{ is not an eigenvalue of \eqref{1dim_Dbc}}
\end{equation} 
and
\begin{equation}
\mu=0 \textit{ is an $m$-fold eigenvalue of \eqref{1dim_Nbc}}
\end{equation} 
which is clear since polynomials of degree less than $m$ are corresponding eigenfunctions (and no others). Furthermore
\begin{equation}\label{A2}
\textit{ all positive eigenvalues of \eqref{1dim_Nbc} coincide with the eigenvalues of \eqref{1dim_Dbc}.}
\end{equation}
Indeed, let $\phi\in C^\infty(0,L)$ solve \eqref{1dim_Nbc} with some $\mu>0$. Then $\psi(t):=\phi^{(m)}(t)$ is an eigenfunction of \eqref{1dim_Dbc} for the same eigenvalue $\lambda=\mu$. One can easily verify that an eigenvalue of \eqref{1dim_Dbc} has the same multiplicity in \eqref{1dim_Nbc} and vice versa. Hence if 
\[
0<\lambda_1\leq\lambda_2\leq... \leq \lambda_n \overset{n\to\infty}{\longrightarrow}\infty
\]
are the eigenvalues of \eqref{1dim_Dbc} counted according to their multiplicity and respectively
\[
0=\mu_1=...=\mu_m<\mu_{m+1}\leq\mu_{m+2}\leq... \leq \mu_n \overset{n\to\infty}{\longrightarrow}\infty
\]
are the eigenvalues of \eqref{1dim_Nbc} then we have $\mu_n = \lambda_{n-m}$ for $n>m$. The asymptotic behaviour of the one-dimensional polyharmonic eigenvalues is given by
\begin{equation}\label{A3}
\qquad\qquad\big(\tfrac{\pi}{L} n\big)^{2m}\leq \lambda_n=\mu_{n+m} \leq \big(\tfrac{\pi}{L}(n+m-1)\big)^{2m}\qquad \forall\, n\in\nat.
\end{equation}
To verify the last assertion one defines functions $f_k(t):=\sin^{m-1}(\pi t)\sin(\pi k t), k\in\nat.$ For simplicity assume $[0,L]=[0,1]$. The functions $f_k$ satisfy Dirichlet boundary conditions and belong to the finite-dimensional subspace $\mathcal{M}_{k+m-1}$ defined by
\[
\mathcal{M}_N:=\spa\{1,\sin(j\pi t),\cos(j\pi t): j=1,...,N\},\quad N\in\nat. 
\]
Note that for any $g\in \mathcal{M}_N$, say $g(t)=\alpha_0 +\sum\limits_{j=1}^N\left(\alpha_j\sin(j\pi t) +\beta_j\cos(j\pi t) \right)$, we get
\[
\Vert g^{(m)}| L_2(0,1)\Vert^2 
=
\sum_{j=1}^N(j\pi)^{2m}\left(\alpha_j^2 +\beta_j^2\right) 
\leq
(N\pi)^{2m}\Vert g| L_2(0,1)\Vert^2.
\]
Therefore by Rayleigh-Ritz variational formula \eqref{varchar_D} we obtain
\[
\lambda_n 
\leq
\sup_{g\in\spa\{f_1,...,f_n\}} 
\frac{\Vert g^{(m)}\,| L_2(0,1)\Vert^2}{\Vert g\,|L_2(0,1)\Vert^2}
\leq \big(\pi(n+m-1)\big)^{2m}.
\] 
On the other hand consider the $m$-times iterated Dirichlet-Laplace eigenvalue problem
\begin{equation}\label{1dim_itDbc}
\begin{split}
(-1)^m v^{(2m)}(t)=\nu\, v(t),\qquad & t\in[0,1]\\
v^{(2j)}(0)=v^{(2j)}(1)=0,\qquad		& j=0,...,m-1
\end{split}
\end{equation}
where the eigenvalues are simply given by the $m$th power of the Dirchlet-Laplace eigenvalues, i.e.
\[
\nu_n=(\pi n)^{2m},\quad n\in\nat.
\]
Comparing the corresponding quadratic form domains of \eqref{1dim_Dbc} and \eqref{1dim_itDbc}, we see that $\nu_n\leq\lambda_n.$ This completes the proof of \eqref{A3}.

\begin{prop}\label{prop_norm*}
Suppose that $a_k(\id^*)$ denotes the $k^{\textup{th}}$ approximation number of the compact embedding $\id^*:W^{m,*}(Q)\hookrightarrow L_2(Q)$
where the Sobolev norm $\Vert\cdot| W^m(Q)\Vert^*$ is given by \eqref{def_Sobnorm*}. Then $(a_k(\id^*))_{k\in\nat}$ is the non-increasing rearrangement of 
\begin{equation}\label{anbar}
a_{\bar{n}}(\id^*) 
= \Big(1+\sum_{j=1}^d \mu_{n_j} \Big)^{-\frac{1}{2}},\quad \bar{n}=(n_1,...,n_d)\in\nat^d,
\end{equation}
where $(\mu_n)_{n\in\nat}$ are the univariate eigenvalues of the polyharmonic operator with Neumann boundary conditions \eqref{1dim_Nbc}.
\end{prop}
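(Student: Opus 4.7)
By Proposition~\ref{prop_sobolevW^m}, applied to the operator $\mathcal{B}^*$ (which corresponds to the coefficient choice $b_\alpha = 1$ iff $\alpha = m\,e_j$ for some $j$), one has
\[
a_k(\id^*) = \bigl(1 + \mu_k^{(d)}\bigr)^{-\tfrac{1}{2}},\qquad k\in\nat,
\]
where $\mu_1^{(d)} \leq \mu_2^{(d)} \leq \cdots$ are the eigenvalues of the Friedrichs extension $B^*$ of $\mathcal{B}^*$, counted with multiplicity. The problem thus reduces to identifying the spectrum of $B^*$ in terms of the one-dimensional eigenvalues $(\mu_n)_{n\in\nat}$ from~\eqref{1dim_Nbc}.

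The key step is separation of variables. Let $(\phi_n)_{n\in\nat}$ be the orthonormal eigenbasis of $L_2(0,L)$ furnished by Proposition~\ref{prop_sobolevW^m} in dimension one, so that $(-1)^m \phi_n^{(2m)} = \mu_n \phi_n$ with $\phi_n^{(j)}(0) = \phi_n^{(j)}(L) = 0$ for $j = m,\ldots,2m-1$. For $\bar{n} = (n_1,\ldots,n_d) \in \nat^d$ I would set $\Phi_{\bar{n}}(x) := \prod_{j=1}^d \phi_{n_j}(x_j)$ for $x \in Q$. A direct calculation gives $\partial_j^{2m}\Phi_{\bar{n}} = (-1)^m \mu_{n_j}\Phi_{\bar{n}}$, hence
\[
\mathcal{B}^*\Phi_{\bar{n}} = \Big(\sum_{j=1}^d \mu_{n_j}\Big)\Phi_{\bar{n}}.
\]
Since the outward unit normal on every face of $Q$ is $\pm e_j$ for some $j$, the univariate Neumann condition $\phi_{n_j}^{(k)}(0) = \phi_{n_j}^{(k)}(L) = 0$ for $k = m,\ldots,2m-1$ transfers to $\partial_\nu^k \Phi_{\bar{n}} = 0$ on $\partial Q$ for the same range of $k$, so $\Phi_{\bar{n}} \in C^{m,\nu}(Q) = \dom\mathcal{B}^*$. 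Moreover the family $(\Phi_{\bar{n}})_{\bar{n}\in\nat^d}$ is orthonormal in $L_2(Q)$ by Fubini, and complete because tensor products of orthonormal bases form an orthonormal basis of the tensor product space.

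By completeness and the self-adjointness of $B^*$, the eigenvalues of $B^*$, counted with multiplicity, form exactly the multiset $\{\sum_{j=1}^d \mu_{n_j} : \bar{n} \in \nat^d\}$. Combined with the formula for $a_k(\id^*)$ above this shows that $(a_k(\id^*))_{k\in\nat}$ is the non-increasing rearrangement of $\bigl(1 + \sum_{j=1}^d \mu_{n_j}\bigr)^{-1/2}$, $\bar{n}\in\nat^d$, which is precisely~\eqref{anbar}. The only point requiring genuine care is the claim that the tensor-product family exhausts the spectrum of $B^*$ (so that no further eigenfunctions exist); this rests on completeness of the $1$D Neumann basis together with the Hilbertian identification $L_2(Q) \cong \bigotimes_{j=1}^d L_2(0,L)$.
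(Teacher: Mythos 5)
Your argument is correct and takes essentially the same route as the paper: tensor products of the univariate Neumann eigenfunctions form a complete orthonormal family of eigenfunctions of $\mathcal{B}^*$ with eigenvalues $\sum_{j=1}^d \mu_{n_j}$, and the identity \eqref{ak} from Proposition \ref{prop_sobolevW^m} then yields \eqref{anbar}. You merely spell out two points the paper leaves implicit, namely the verification of the Neumann boundary conditions on $\partial Q$ and the completeness argument showing that the tensor-product eigenvalues exhaust the spectrum of $B^*$.
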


\begin{proof}
Let $\{\phi_n:n\in\nat\}$ be the complete orthonormal set of eigenfunctions in $L_2(0,L)$ corresponding to the eigenvalues $(\mu_n)_{n\in\nat}$. For $\bar{n}=(n_1,...,n_d)\in\nat^d$ and $x=(x_1,...,x_d)\in [0,L]^d$ we define
\[
f_{\bar{n}}(x):= \prod_{j=1}^d \phi_{n_j}(x_j).
\]
Then $\{f_{\bar{n}}: \bar{n}\in\nat^d\}$ is an orthonormal basis of $L_2([0,L]^d)$. Furthermore $f_{\bar{n}}$ are (all) eigenfunctions of the operator $\mathcal{B}^*=(-1)^m\sum\limits_{i=1}^d \partial^{2m}_i$ since
\begin{align*}
(\mathcal{B}^* f_{\bar{n}})(x) 
= \sum_{i=1}^d \bigg(\prod_{\substack{j=1\\j\neq i}}^d \phi_{n_j}(x_j) \bigg) (-1)^m\phi_{n_i}^{(2m)}(x_i)
=
\Big(\sum_{i=1}^d \mu_{n_i}\Big)\,f_{\bar{n}}(x).
\end{align*}
Finally \eqref{ak} shows \eqref{anbar}.
\end{proof}

\begin{theo}\label{theo_norm*}
Let $Q=[0,L]^d$. The following hold
\begin{itemize}
\item[(i)] 
\emph{Curse of dimension: }
\begin{equation}
a_k(\id^*: W^{m,*}(Q)\hookrightarrow L_2(Q))=1,\qquad 1\leq k\leq m^d
\end{equation}
\item[(ii)] 
\emph{Asymptotic constants: }
\begin{equation}
\lim_{k\to\infty}\frac{a_k(\id^*: W^{m,*}(Q)\hookrightarrow L_2(Q))}{k^{-\frac{m}{d}}}=\Big(\frac{L}{2\pi}\Big)^m\, \big(\vol B^d_{2m} \big)^{\frac{m}{d}}
\end{equation}
\item[(iii)] 
\emph{Explicit estimates for large $k$: }
\begin{align}\label{explicit*_up}
\forall\, k\geq (2m)^d (d^{\frac{1}{2m}}+1)^d \vol B^d_{2m}:\quad 
&a_k(\id^*)\leq  \Big(\frac{L}{\pi}\Big)^m \big(\vol B^d_{2m} \big)^{\frac{m}{d}}\cdot k^{-\frac{m}{d}}\\\label{explicit*_low}
\forall\, k\geq \Big(\tfrac{1}{2}+\tfrac{m+d^{\frac{1}{2m}}}{2} + \tfrac{L}{2\pi} \Big)^d \vol B^d_{2m}:\quad 
&a_k(\id^*)\geq \Big(\frac{L}{4\pi}\Big)^m \big(\vol B^d_{2m} \big)^{\frac{m}{d}}\cdot k^{-\frac{m}{d}}
\end{align}
\end{itemize}
\end{theo}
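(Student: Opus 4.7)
The approach is to reduce everything to a lattice-counting problem via Proposition \ref{prop_norm*}. By \eqref{anbar}, the sequence $(a_k(\id^*))_k$ is the non-increasing rearrangement of the family $\{(1+\sum_{j=1}^d\mu_{n_j})^{-1/2}:\bar n\in\nat^d\}$, so introducing the counting function $N(T):=\#\{\bar n\in\nat^d:\sum_j\mu_{n_j}\le T\}$ one has $a_k(\id^*)=(1+T_k)^{-1/2}$ with $T_k:=\inf\{T\ge 0:N(T)\ge k\}$. Part (i) then follows immediately: since $\mu_1=\cdots=\mu_m=0$, each of the $m^d$ tuples $\bar n\in\{1,\dots,m\}^d$ produces the value $1$, which is the maximum possible, so the rearrangement starts with $m^d$ ones.

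For (ii) I would use the one-dimensional bounds \eqref{A3} to sandwich the inner sums:
\[
\sum_{j=1}^d\big(\tfrac{\pi}{L}(n_j-m)_+\big)^{2m}\;\le\;\sum_{j=1}^d\mu_{n_j}\;\le\;\sum_{j=1}^d\big(\tfrac{\pi}{L}(n_j-1)\big)^{2m}.
\]
Each extreme side defines a counting problem for lattice points in an $\ell_{2m}^d$-ball (possibly shifted). Attaching to each $\bar n\in\nat^d$ the disjoint unit cube $\bar n+[-1,0]^d\subset\real_+^d$ compares these lattice counts to the volume of the corresponding orthant of an $\ell_{2m}^d$-ball, giving $N(T)=(L/(2\pi))^d\vol B^d_{2m}\,T^{d/(2m)}(1+o(1))$ as $T\to\infty$; inversion and $(1+T)^{-1/2}\sim T^{-1/2}$ then yield the limit in (ii). Equivalently, this is the specialisation of the general Weyl-type formula \eqref{asymptbehaviour} (which is valid for Neumann as well as Dirichlet boundary conditions) to $a(z)=\sum z_j^{2m}$, $A^d_m=B^d_{2m}$ and $\vol Q=L^d$.

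For the explicit bounds in (iii) the same sandwich must be made quantitative. For the upper bound on $a_k$, combine the lower estimate $\mu_n\ge(\pi(n-m)_+/L)^{2m}$ with a stratification by $S=\{j:n_j\le m\}$; writing $k_j=n_j-m$ on the complement $S^c$ yields
\[
N(T)\;\le\;\sum_{s=0}^d\binom{d}{s}m^s M_{d-s}(T),\qquad M_r(T):=\#\{\bar k\in\nat^r:\textstyle\sum_j(\pi k_j/L)^{2m}\le T\},
\]
and the same cube argument gives $M_r(T)\le (L/(2\pi))^r\vol B^r_{2m}\,T^{r/(2m)}$. The hypothesis $k\ge(2m)^d(d^{1/(2m)}+1)^d\vol B^d_{2m}$ is precisely what makes the $s=0$ term dominate the sum, so that $N(T)$ is majorised by $(L/\pi)^d\vol B^d_{2m}\,T^{d/(2m)}$ (the extra factor $2^d$ being the cost of collapsing the sum to one clean term); setting $N(T)<k$ and solving for $T$ produces \eqref{explicit*_up}. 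The lower bound on $a_k$ proceeds symmetrically: the upper estimate $\mu_n\le(\pi(n-1)/L)^{2m}$ combined with the shifted cubes $(\bar n-\mathbf{1})+[0,1]^d$ yields $N(T)\ge(L/(2\pi))^d\vol B^d_{2m}\,T^{d/(2m)}$, and choosing $T$ so that this reaches $k$ gives $a_k\ge(1+T)^{-1/2}$; a second factor $2^m$ is absorbed from the ``$1+$'' once $T$ is sufficiently large, which determines the threshold in \eqref{explicit*_low}. The main obstacle is the careful bookkeeping of the several small perturbations — the stratification by $S$, the shifts $n_j\mapsto n_j-m$ and $n_j\mapsto n_j-1$, and the transition $(1+T)^{-1/2}\leftrightarrow T^{-1/2}$ — so that everything is dominated by the leading $T^{d/(2m)}$ contribution under the explicit thresholds $k_0(m,d)$; this is essentially the same optimisation as the $\gamma$-tuning performed in the proof of Theorem \ref{theo_kroe}.
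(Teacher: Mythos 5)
Your overall frame coincides with the paper's: reduce to the tensorized eigenvalue count via Proposition \ref{prop_norm*}, sandwich the univariate Neumann eigenvalues by \eqref{A3}, and compare lattice counts for $\ell_{2m}^d$-balls with volumes via unit cubes; parts (i) and (ii) are fine this way (only note that your aside that \eqref{asymptbehaviour} ``is valid for Neumann as well as Dirichlet boundary conditions'' is not established in the paper, which states it for the Dirichlet problem -- but you do not need it, since your direct counting already gives (ii)). Where you genuinely deviate is in (iii). The paper works with the counting function $C(l,d)$ indexed by the univariate eigenvalue rank $l$ and disposes of the $m$ zero Neumann modes inside a single $d$-dimensional count, writing $n_i=(n_i-m)_++x_i$ with $x_i\le m$ and using the triangle inequality in $\ell^d_{2m}$ together with Lemma \ref{lemma_A(r)}; it therefore only ever compares a $d$-dimensional count with $\vol B^d_{2m}$ at a shifted radius. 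You instead stratify over $S=\{j:n_j\le m\}$, producing the binomial sum $\sum_{s}\binom{d}{s}m^s M_{d-s}(T)$ whose terms live in different dimensions. Your lower bound for $a_k$ is in fact simpler and stronger than the paper's: the covering argument gives $N(T)\ge (L/(2\pi))^d\vol B^d_{2m}\,T^{d/(2m)}$ unconditionally, and absorbing the ``$1+$'' only requires $T\ge (4^m-1)^{-1}$, a much weaker condition than the stated threshold, so \eqref{explicit*_low} follows for the whole claimed range.

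The one real hole is the assertion that the hypothesis $k\ge (2m)^d(d^{1/(2m)}+1)^d\vol B^d_{2m}$ ``is precisely what makes the $s=0$ term dominate'' up to the factor $2^d$. This is the crux of \eqref{explicit*_up} in your route and you state it without proof; it is not automatic, because collapsing your sum forces you to control the cross-dimensional ratios $\vol B^{d-s}_{2m}/\vol B^d_{2m}=\Gamma\big(1+\tfrac{d}{2m}\big)\big(\Gamma\big(1+\tfrac{d-s}{2m}\big)\,(2\Gamma(1+\tfrac{1}{2m}))^{s}\big)^{-1}$, which grow essentially like $(c\,d^{1/(2m)})^s$ -- exactly the rate that the factor $\big(2\pi m/(L\,T^{1/(2m)})\big)^s\le (d^{1/(2m)}+1)^{-s}$ supplied by your hypothesis (evaluated at $T^*=(\pi/L)^{2m}(k/\vol B^d_{2m})^{2m/d}$) must beat. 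The verification can be completed: by log-convexity $\Gamma(1+\tfrac{d}{2m})/\Gamma(1+\tfrac{d-s}{2m})\le (1+\tfrac{d}{2m})^{s/(2m)}\le (1+d^{1/(2m)})^{s}$ and $2\Gamma(1+\tfrac{1}{2m})>1$, so each ratio of the $s$-th to the $0$-th term is at most $\binom{d}{s}$, the sum is at most $2^d$ times the leading term, and since the majorant increases in $T$ one gets $N(T)<k$ for all $T<T^*$, hence $a_k\le (T^*)^{-1/2}$, which is \eqref{explicit*_up}. So your approach does yield the theorem, but this Gamma/volume-ratio bookkeeping is precisely the nontrivial step you waved at and it must be written out; the paper's shift-plus-triangle-inequality device in Lemma \ref{lemma_A(r)} is designed to avoid it altogether.
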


\begin{lemma}\label{lemma_A(r)}
Let $1\leq p<\infty.$ Let $A(r):=\#\{\bar{k}\in\nat^d: \Vert \bar{k}|\ell_p^d\Vert \leq r\}, r> d^{\frac{1}{p}}$. Then it holds
\[
2^{-d}(r-d^{\frac{1}{p}})^d \vol B^d_p\leq A(r)\leq 2^{-d}(r+d^{\frac{1}{p}})^d \vol B^d_p.
\]
\end{lemma}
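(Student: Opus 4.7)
The plan is a standard lattice-point-counting argument via associating to each $\bar{k}\in\nat^d$ a unit cube, then sandwiching the corresponding union of cubes between two $\ell_p$-balls in the positive orthant.

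First I would attach to each $\bar{k}=(k_1,\dots,k_d)\in\nat^d$ the unit cube
\[
Q_{\bar{k}}:=\bar{k}+[-1,0]^d=[k_1-1,k_1]\times\dots\times[k_d-1,k_d].
\]
Since the $k_i$ are positive integers, the family $\{Q_{\bar{k}}:\bar{k}\in\nat^d\}$ tiles $[0,\infty)^d$ up to a set of measure zero, and each cube has unit volume. For any $x\in Q_{\bar{k}}$ one has $|x_j-k_j|\leq 1$ for every $j$, hence $\|x-\bar{k}\|_p\leq d^{1/p}$, and the reverse triangle inequality in $\ell_p^d$ gives
\[
\bigl|\,\|x\|_p-\|\bar{k}\|_p\,\bigr|\leq d^{1/p}.
\]

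For the upper bound, I would argue that if $\|\bar{k}\|_p\leq r$, then every $x\in Q_{\bar{k}}$ satisfies $\|x\|_p\leq r+d^{1/p}$, so
\[
\bigsqcup_{\substack{\bar{k}\in\nat^d\\\|\bar{k}\|_p\leq r}}Q_{\bar{k}}\;\subseteq\;\bigl\{x\in[0,\infty)^d:\|x\|_p\leq r+d^{1/p}\bigr\}.
\]
Taking volumes and using that the $\ell_p$-ball of radius $R$ intersected with the positive orthant has volume $2^{-d}R^d\vol B^d_p$ gives $A(r)\leq 2^{-d}(r+d^{1/p})^d\vol B^d_p$.

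For the lower bound, I would reverse the inclusion: if $x\in[0,\infty)^d$ satisfies $\|x\|_p\leq r-d^{1/p}$, pick the unique (up to measure zero) $\bar{k}\in\nat^d$ with $x\in Q_{\bar{k}}$; here the assumption $r>d^{1/p}$ together with $x$ having nonnegative coordinates guarantees that $\bar{k}_j=\lceil x_j\rceil\geq 1$. Then $\|\bar{k}\|_p\leq\|x\|_p+d^{1/p}\leq r$, so
\[
\bigl\{x\in[0,\infty)^d:\|x\|_p\leq r-d^{1/p}\bigr\}\;\subseteq\;\bigcup_{\substack{\bar{k}\in\nat^d\\\|\bar{k}\|_p\leq r}}Q_{\bar{k}},
\]
and a volume comparison yields $2^{-d}(r-d^{1/p})^d\vol B^d_p\leq A(r)$. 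The only subtle point is the convention $\nat=\{1,2,\dots\}$ and the choice of the cube orientation so that cubes indexed by $\nat^d$ exactly tile $[0,\infty)^d$; once that bookkeeping is fixed, everything else is the reverse triangle inequality plus one volume computation, so there is no real obstacle.
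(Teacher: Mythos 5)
Your proposal is correct and follows essentially the same route as the paper: the paper also sets $Q_{\bar{k}}=\bar{k}+[-1,0]^d$ and deduces both bounds from exactly the two set inclusions you write down, comparing volumes of the union of unit cubes with the positive-orthant $\ell_p$-balls of radii $r\mp d^{1/p}$. You merely spell out the reverse-triangle-inequality step and the boundary bookkeeping that the paper leaves implicit.
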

\begin{proof}
For $\bar{k}\in\nat^d$ we put $Q_{\bar{k}}:= \bar{k}+[-1,0]^d.$ The assertion follows from the set inclusions
\[
\left\{y\in\rd_+ : \Vert y\Vert_p\leq r-d^{\frac{1}{p}}\right\}
\subseteq
\bigcup_{\substack{\bar{k}\in\nat^d\\ \Vert\bar{k}\Vert_p\leq r}} Q_{\bar{k}}
\subseteq
\left\{y\in\rd_+ : \Vert y\Vert_p\leq r+d^{\frac{1}{p}}\right\}.
\]
\end{proof}

\begin{proof}[Proof of Theorem \ref{theo_norm*}]
(i) is clear, since the value $1+ \sum\limits_{i=1}^d\mu_{n_i}$ with $
(n_1,...,n_d)\in\nat^d$ is $m^d$-times assumed to be $1$ because $\mu_1=...=\mu_m=0$. We exploit Proposition \ref{prop_norm*} and adapt techniques from \cite{KSU14}. We define the cardinality
\[
C(l,d):= \#\big\{\bar{n}\in\nat^d: a_{\bar{n}}(\id^*)\geq (1+\mu_l)^{-\frac{1}{2}}\big\},
\quad l,d\in\nat,
\]
and conclude that
\begin{equation}\label{eq_a_C}
a_{C(l,d)}(\id^*)=(1+\mu_l)^{-\frac{1}{2}}.
\end{equation}
With \eqref{A3} and Lemma \ref{lemma_A(r)} we observe
\begin{align*}
C(l,d)
&=
\#\big\{\bar{n}\in\nat^d: \sum_{i=1}^d\mu_{n_i}\leq \mu_l\big\}
\leq
\#\Big\{\bar{n}\in\nat^d: \Big(\sum_{i=1}^d\big(n_i-m \big)^{2m}_{+}\Big)^{\frac{1}{2m}}\leq l-1 \Big\} \\
&\leq
\#\big\{\bar{n}\in\nat^d: \Vert\bar{n}\Vert_{2m} \leq l-1 +d^{\frac{1}{2m}}m\big\}
\leq
2^{-d}\big(l-1 + d^{\frac{1}{2m}}(m+1)\big)^d\vol B^d_{2m}
\end{align*}
where the second inequality follows from the triangle inequality in $\ell_{2m}^d$ and the simple fact that
$n_i=(n_i-m)_{+} + x_i$ where $x_i:=\begin{cases} m,&\text{ if }n_i>m\\ n_i,&\text{ if } n_i\leq m\end{cases}$. Furthermore for any $l> d^{\frac{1}{2m}}+m$
\begin{align*}
C(l,d)
&=
\#\big\{\bar{n}\in\nat^d: \sum_{i=1}^d\mu_{n_i}\leq \mu_l\big\}
\geq
\#\Big\{\bar{n}\in\nat^d: \Big(\sum_{i=1}^d n_i^{2m}\Big)^{\frac{1}{2m}}\leq l-m \Big\} \\
&\geq
2^{-d}\big(l-m - d^{\frac{1}{2m}}\big)^d\vol B^d_{2m} 
\end{align*}
Hence relation \eqref{eq_a_C} and the monotonicity of approximation numbers yield
\begin{align*}
&\forall\ k\geq A(l,d):= 2^{-d}\big(l-1 + d^{\frac{1}{2m}}(m+1)\big)^d\vol B^d_{2m}:\
&a_k(\id^*)\leq (1+\mu_l)^{-\frac{1}{2}} \\
&\forall\ k\leq B(l,d):= 2^{-d}\big(l-m - d^{\frac{1}{2m}}\big)^d\vol B^d_{2m}:\ 
&a_k(\id^*)\geq (1+\mu_l)^{-\frac{1}{2}}.
\end{align*}
Based on this lower and upper estimates we  prove (ii) and (iii). Let $l\in\nat$ with $A(l,d)\leq k< A(l+1,d).$ Then
\begin{equation}\label{ineq_1}
k^{\frac{m}{d}}a_k(\id^*)
\leq 
\big[A(l+1,d)\big]^{\frac{m}{d}} (1+\mu_l)^{-\frac{1}{2}}
\leq
\frac{2^{-m} \big(l+d^{\frac{1}{2m}}(m+1)\big)^m}{\big(1+(\frac{\pi}{L}(l-m)_{+})^{2m}\big)^{\frac{1}{2}}} \big(\vol B^d_{2m}\big)^{\frac{m}{d}}
\end{equation}
where the right hand side of the inequality tends to $\big(\frac{L}{2\pi} \big)^m \big(\vol B^d_{2m}\big)^{\frac{m}{d}}$ as $l\to\infty.$ A similar lower inequality finally shows (ii). Now we fix $l_0:= \lceil d^{\frac{1}{2m}}(m+1)+2m\rceil$ and $k\geq A(l_0,d).$ Then for all $l\in\nat, l\geq l_0,$ satisfying $A(l,d)\leq k < A(l+1,d)$, we go ahead  with \eqref{ineq_1} and conclude
\[
k^{\frac{m}{d}}a_k(\id^*)
\leq 
\Big( \frac{L}{2\pi}\Big)^{m} \Big( \frac{l+d^{\frac{1}{2m}}(m+1)}{l-m}\Big)^m \big(\vol B^d_{2m}\big)^{\frac{m}{d}}
\leq
\Big( \frac{L}{2\pi}\Big)^{m} 2^m \big(\vol B^d_{2m}\big)^{\frac{m}{d}}
\]
for large enough $k\geq A(l_0,d).$ The particular choice of $l_0$  implies 
\[
A(l_0,d) 
\leq 
\big(d^{\frac{1}{2m}}(m+1)+m\big)^d\vol B^d_{2m}
\leq
(2m)^d (d^{\frac{1}{2m}}+1)^d \vol B^d_{2m}.
\]
This proves the explicit upper estimate in (iii). Let $l_1:=\lceil 1+2(m+d^{\frac{1}{2m}})+\frac{L}{\pi}\rceil$ and $k\geq B(l_1-1,d).$ Then it holds for any $l\in\nat, l\geq l_1,$ with $B(l-1,d)\leq k<B(l,d)$
\begin{align*}
k^{\frac{m}{d}}a_k(\id^*)
&\geq
\big[B(l-1,d)\big]^{\frac{m}{d}} (1+\mu_l)^{-\frac{1}{2}}
\geq
\Big(\frac{L}{2\pi}\Big)^{m}\Big(\frac{l-1-m-d^{\frac{1}{2m}}}{l-1+\frac{L}{\pi}} \Big)^m
\big(\vol B^d_{2m}\big)^{\frac{m}{d}}\\
&\geq
\Big(\frac{L}{2\pi}\Big)^{m} 2^{-m} \big(\vol B^d_{2m}\big)^{\frac{m}{d}}
\end{align*}
where we used \eqref{A3} and $(1+x^{2m})^{\frac{1}{2}}\leq (1+x)^m, x\geq 0.$ Furthermore,
\[
B(l_1-1,d)
\leq 
2^{-d}\big(1+m+d^{\frac{1}{2m}}+\tfrac{L}{\pi} \big)^d \vol B^d_{2m}.
\]
\end{proof}

\begin{rem}
In (iii) of Theorem \ref{theo_norm*} one can improve the factor $2^m$ (or resp. $2^{-m}$) by $(1+\varepsilon)^m$ (or resp. $(1-\varepsilon)^m$) for an arbitrary small $0<\varepsilon\leq 1$ by enlarging the index $k.$ In fact we have
\begin{equation}
\forall k\geq k_0(\varepsilon):
\ a_k(\id^*)\, k^{\frac{m}{d}}
\leq  
\Big(\frac{L}{2\pi}\Big)^m (1+\varepsilon)^m\big(\vol B^d_{2m} \big)^{\frac{m}{d}}
\end{equation}
where 
\[
k_0(\varepsilon):=\Big(\frac{1+\varepsilon}{\varepsilon}\Big)^d m^d (d^{\frac{1}{2m}}+1)^d \vol B^d_{2m}\overset{\varepsilon\to 0}{\longrightarrow}\infty
\] 
and respectively
\begin{equation}
\forall k\geq k_1(\varepsilon):
\ a_k(\id^*)\, k^{\frac{m}{d}}
\geq  
\Big(\frac{L}{2\pi}\Big)^m (1-\varepsilon)^m\big(\vol B^d_{2m} \big)^{\frac{m}{d}}
\end{equation}
where 
\[
k_1(\varepsilon)
:=
\Big(\tfrac{1}{2}+\big(\tfrac{1-\varepsilon}{\varepsilon}\big)\big(\tfrac{m+d^{\frac{1}{2m}}}{2} + \tfrac{L}{2\pi} \big)\Big)^d \vol B^d_{2m}\overset{\varepsilon\to 0}{\longrightarrow}\infty.
\] 
Note that the index bounds $k_0(1)$ and $k_1(1)$ are worse if we compared them to the periodic results \cite{KSU14}[Theorem 4.12]. There the authors showed the upper estimate \eqref{explicit*_up} with $\varepsilon=1$ for all $k\geq (d^{\frac{1}{2m}}+2)^d\vol B^d_{2m}$ and the lower estimate \eqref{explicit*_low} with $\varepsilon=\tfrac{1}{2}$ for all $k\geq \big(3+ \frac{d^{\frac{1}{2m}}}{2}\big)^d\vol B^d_{2m}.$ This deviation is a consequence of the fact that we have to use \eqref{A3} while in the periodic case the eigenvalues are known explicitly.

\end{rem}

\begin{rem}
As in \cite[Lemma 4.10]{KSU14} one can estimate the volume of the unit ball $B^d_{2m}$ by
\[
2^d \big(e(d+2m)\big)^{-\frac{d}{2m}}
\leq
\vol B^d_{2m}
\leq
2^d \Big(\frac{2em}{d} \Big)^{\frac{d}{2m}}.
\]
Note that
\[
\lim_{d\to\infty} \sqrt{d}\cdot\big(\vol B^d_{2m} \big)^{\frac{m}{d}}
=2^{m}\cdot \sqrt{2em}\cdot \Gamma(1+\tfrac{1}{2m})^{m}.
\]
In the particular case of $[0,L]=[0,2\pi]$ this gives
\begin{align}
\forall\, k\geq (6m+3)^d e^{\frac{d}{2m}}:\quad 
&a_k(\id^*)\leq  \frac{4^m}{\sqrt{d}} \sqrt{2em}\cdot k^{-\frac{m}{d}}\\
\forall\, k\geq (\tfrac{3}{2}m+6)^d  e^{\frac{d}{2m}}:\quad 
&a_k(\id^*)\geq \frac{1}{\sqrt{e(d+2m)}}\cdot k^{-\frac{m}{d}}.
\end{align}
\end{rem}

\paragraph*{Acknowledgements}
This work was supported by Deutsche Forschungsgemeinschaft (DFG) [project number 365818999].

\end{document}